\documentclass[11pt]{amsart}
\usepackage{amssymb,amsmath,amsthm,float,enumerate}
\usepackage[margin=1.25in]{geometry}
\usepackage[T1]{fontenc}
\usepackage[utf8]{inputenc}

\newcounter{thmc}
\newcounter{propc}

\newcounter{conjc}
\newtheorem{thm}[thmc]{Theorem}
\newtheorem{lem}[propc]{Lemma}
\newtheorem{prop}[propc]{Proposition}
\newtheorem{corollary}[thmc]{Corollary}
\newtheorem*{cl}{Claim}
\newtheorem{problem}{Problem}
\newtheorem{conj}[conjc]{Conjecture}
\newtheorem{definition}{Definition}

\newcommand{\F}{\mathcal{F}}
\newcommand{\PP}{\mathcal{P}}
\newcommand{\B}{\mathcal{B}}
\newcommand{\D}{\mathcal{D}}
\DeclareMathOperator{\sat}{sat}
\DeclareMathOperator{\isat}{sat^\ast}
\DeclareMathOperator{\bc}{bc}

\usepackage{tikz}
\usetikzlibrary{shapes}

\usetikzlibrary{arrows}
\usetikzlibrary{positioning}
\usetikzlibrary{calc}
\usetikzlibrary{patterns,snakes}

\begin{document}

\title[The Saturation Number of Induced Subposets of the Boolean Lattice]{The Saturation Number of Induced Subposets of the Boolean Lattice}

\author[Ferrara]{Michael Ferrara}
\address{Department of Mathematical and Statistical Sciences, University of Colorado, Denver, CO 80204 (Ferrara, Sullivan)}

\author[Kay]{Bill Kay}
\address{Department of Mathematics and Computer Science, Emory University, Atlanta, GA 30322 (Kay)}

\author[Kramer]{Lucas Kramer$^*$}
\address{Department of Mathematics, Briar Cliff University, Sioux City, IA 51104 (Kramer)}

\author[Martin]{Ryan R. Martin}
\address{Department of Mathematics, Iowa State University, Ames, IA 50011 (Martin)}

\author[Reiniger]{Benjamin Reiniger}
\address{Department of Applied Mathematics, Illinois Institute of Technology, Chicago, IL 60616 (Reiniger)}

\author[Smith]{Heather C. Smith$^{\dagger}$}
\address{School of Mathematics, Georgia Institute of Technology, Atlanta, GA 30332 (Smith)}

\author[Sullivan]{Eric Sullivan}

\keywords{Posets, saturation, induced saturation}
\subjclass[2010]{06A07, 05D05}

\begin{abstract}
Given a poset $\PP$, a family $\F$ of elements in the Boolean lattice is said to be $\PP$-saturated if (1) $\F$ contains no copy of $\PP$ as a subposet and (2) every proper superset of $\F$ contains a copy of $\PP$ as a subposet. The maximum size of a $\PP$-saturated family is denoted by ${\rm La}(n,\PP)$, which has been studied for a number of choices of $\PP$.  The minimum size of a $\PP$-saturated family, $\sat(n,\PP)$, was introduced by Gerbner et al. (2013), and parallels the deep literature on the saturation function for graphs.  

   We introduce and study the concept of saturation for induced subposets.  As opposed to induced saturation in graphs, the above definition of saturation for posets extends naturally to the induced setting.  We give several exact results and a number of bounds on the induced saturation number for several small posets.  We also use a transformation to the biclique cover problem to prove a logarithmic lower bound for a rich infinite family of target posets.
\end{abstract}

\let\thefootnote\relax\footnotetext{$^{\dagger}$Corresponding author}
\let\thefootnote\relax\footnotetext{$^{*}$Much of the research done by L. Kramer was completed while affiliated with Bethel College, North Newton, KS and with Iowa State
University, Ames, IA.}
\let\thefootnote\relax\footnotetext{Email addresses: \tt{\{michael.ferrara,eric.2.sullivan\}@ucdenver.edu; bill.w.kay@gmail.com; 
 lucas.kramer@briarcliff.edu; 
 rymartin@iastate.edu; 
 breiniger@iit.edu; 
 heather.smith@math.gatech.edu.}}

\maketitle
\section{Introduction}

A \emph{partially ordered set} (henceforth \emph{poset}) $\mathcal{P}=(P, \leq)$ consists of a set of elements $P$ and a binary relation $\leq$ that is reflexive, transitive, and antisymmetric. We say that a poset $\PP' = ({P}', \leq')$ is a {\em subposet}, sometimes called a {\em weak subposet},  of $\PP = ({P}, \leq)$  if there exists an injective function $f: {P}' \rightarrow {P}$ such that if $u \leq' v$ in $\PP'$ then $f(u) \leq f(v)$ in $\PP$. If a poset $\mathcal{Q}$ does not contain a target poset $\PP$ as a subposet, we say that $\mathcal{Q}$ is $\PP$-free.  The study of $\PP$-free posets dates back to Sperner's Theorem \cite{Sperner}.

The $n$-dimensional Boolean Lattice, $\B_n$, denotes the poset ($2^{[n]}, \subseteq)$ that consists of all subsets of $[n]:= \{1,2,\ldots,n\}$ ordered by inclusion.  
A significant amount of research focuses on ${\rm La}(n, \PP)$, the size of the largest family of elements in $\B_n$ (ordered by inclusion) that is $\PP$-free, and is an analogue to the classical extremal function in graph theory.  We refer the interested reader to  \cite{griggsliposet} for a thorough survey.

In this paper, we are interested in the minimum size of a poset that is maximal with respect to being $\PP$-free. In particular, given a host poset $\mathcal{Q}=(Q,\leq_Q)$, a target poset $\PP=(P, \leq_P)$, and a family $\F \subseteq Q$, we say $\F$ is {\em $\PP$-saturated in $\mathcal{Q}$} if the following two properties hold: 
	\begin{itemize}
		\item $\PP$ is not a subposet of $\F$ (ordered by the restriction of $\leq_Q$ to $\F \times \F$), and 
		\item for any $\F'$ with $\F \subsetneq \F'\subseteq  Q$, $\PP$ is a subposet $\F'$  (ordered by the restriction of $\leq_Q$ to $\F' \times \F'$).
	\end{itemize}  
	For $n \geq 2$ and poset $\PP$, one can describe the extremal function ${\rm La}(n,\PP)$ as the maximum size of a $\PP$-saturated family in $\B_n$. Focusing on the case where the host poset is the Boolean lattice, $\B_n$, we define $\sat(n,\PP)$ to be the \textit{minimum} size of a family that is $\PP$-saturated in $\B_n$.  This quantity, $\sat(n,\PP)$, is termed the \textit{saturation number} or \textit{saturation function} of $\PP$.  

The study of saturation problems in posets parallels the rich literature on graph saturation problems. Given graphs $G$ and $H$, we say that $G$ is {\em $H$-saturated} when $G$ does not contain $H$ as a subgraph, but for any edge $e\not\in E(G)$, the graph $G'=(V(G), E(G) \cup \{e\})$ contains $H$.  The classical extremal number ${\rm ex}(H,n)$ is the maximum number of edges in an $H$-saturated graph with $n$ vertices.  Of interest here is the minimum number of edges in an $H$-saturated graph, denoted $\sat(H,n)$.  
Erd\H{o}s, Hajnal and Moon \cite{EHM} introduced this concept, determined $\sat(K_t,n)$, and characterized the unique saturated graph of minimum size for all $n$ and $t$.  For a thorough survey of saturation in graphs and hypergraphs, we refer the reader to \cite{FFS11}.

Gerbner et al. \cite{GKLPPP13} introduced the saturation function in the setting of posets.  Let $\PP_k$ denote the $k$-element chain, which is the poset with $k$ elements in which each pair of elements is comparable.  A main result of \cite{GKLPPP13} is the following.  
\begin{thm}[Gerbner et al. ~\cite{GKLPPP13}]
	For $n$ sufficiently large, 
	\begin{align*}
		2^{k/2-1}\leq \sat(n,\PP_{k+1}) \leq 2^{k-1}.
	\end{align*}
\end{thm}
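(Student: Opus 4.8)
The plan is to treat the two inequalities separately after one common reduction. A family $\F$ is $\PP_{k+1}$-saturated precisely when it is a maximal family of subsets of $[n]$ containing no chain of $k+1$ sets. For $B\subseteq[n]$ write $d(B)$ for the length of a longest chain of $\F$ whose members are properly contained in $B$, and $u(B)$ for the longest chain of $\F$ whose members properly contain $B$; then the longest chain through $B$ in $\F\cup\{B\}$ has length $d(B)+1+u(B)$. Thus $\F$ is saturated iff every member satisfies $d(B)+1+u(B)\le k$ while every non-member satisfies $d(B)+u(B)\ge k$. Since $\F\ne 2^{[n]}$ for large $n$ and adjoining one set raises the height by at most one, saturation forces $\F$ to have height exactly $k$; in particular $\F$ contains a chain $C_1\subsetneq\cdots\subsetneq C_k$ and an element of every height $1,\dots,k$. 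Both bounds are read off from this local reformulation.

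For the upper bound I would exhibit an explicit saturated family of size $2^{k-1}$. Fix $n\ge k-1$ and partition $[n]$ into the singletons $P_1=\{1\},\dots,P_{k-2}=\{k-2\}$ together with one ``fat'' block $P_{k-1}=\{k-1,k,\dots,n\}$. For $S\subseteq[k-1]$ set $A_S=\bigcup_{j\in S}P_j$ and let $\F=\{A_S:S\subseteq[k-1]\}$, an order-embedded copy of $\B_{k-1}$ of size $2^{k-1}$ and height $k$, hence $\PP_{k+1}$-free. To check maximality, take $B\notin\F$ and let $S^-=\{j:P_j\subseteq B\}$ and $S^+=\{j:P_j\cap B\ne\emptyset\}$, so that $A_{S^-}\subsetneq B\subsetneq A_{S^+}$. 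Because every part except $P_{k-1}$ is a singleton, a part can be met without being contained only if it is $P_{k-1}$; hence $S^+\setminus S^-\subseteq\{k-1\}$ and $|S^+|-|S^-|=1$. Splicing a maximal chain of $\F$ from $\emptyset$ up to $A_{S^-}$, then $B$, then a maximal chain from $A_{S^+}$ up to $[n]$ produces a chain of length $(|S^-|+1)+1+(k-|S^+|)=k+1$, so adding $B$ creates a copy of $\PP_{k+1}$. (The case $k=3$ is the family $\{\emptyset,\{1\},\{2,\dots,n\},[n]\}$.)

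For the lower bound I would show that saturation repeatedly forces ``side branches.'' The point is that a single long chain already satisfies the local condition along itself, so the content lies in sets $B$ that step sideways off a chain. Concretely, given $A\in\F$ of height $h\le k-2$, suppose the members of $\F$ lying above $A$ formed a single chain with no branching; choosing $y$ outside its bottom set and examining $B=A\cup\{y\}$, the inequality $d(B)+u(B)\ge k$ can be met only if $\F$ contains an extra set below $B$ (necessarily of the form $A'\cup\{y\}$ with $A'\subsetneq A$) or an extra set above $A$ incomparable to the assumed chain. Iterating, I would build inside $\F$ a binary tree whose root is a minimal element and in which descending one tree level consumes two levels of height while doubling the number of recorded sets; a tree of depth $\lfloor k/2\rfloor$ then yields $2^{\lfloor k/2\rfloor}\ge 2^{k/2-1}$ pairwise distinct members of $\F$. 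Equivalently one may phrase this as a recursion $\sat(n,\PP_{k+1})\ge 2\,\sat(n,\PP_{k-1})$ obtained by peeling the two extreme height levels.

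The main obstacle is the rate of the lower bound. Elementary global arguments are too weak: since a single $k$-element chain can, by insertion, absorb at most about $2^{\,n-k+1}$ sets, saturation forces at least $2^{k-1}$ distinct $k$-chains in $\F$, yet these can be realised by only linearly many sets, so pure counting yields merely $|\F|=\Omega(k)$. Extracting the exponential bound therefore requires the branching analysis above, and its delicate part is the bookkeeping that keeps the forced side-branches \emph{distinct} and \emph{incomparable} across iterations: one must argue that $y$ can always be chosen so that the new set it produces was not counted before and does not exhaust the remaining height budget. This is exactly what should pin the growth rate at $\sqrt{2}$ per level, and I expect it, rather than either verification, to be where the real work lies.
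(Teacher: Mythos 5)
This theorem is not proved in the paper at all: it is quoted as background from Gerbner et al.~\cite{GKLPPP13}, so your attempt has to stand entirely on its own. Its first half does. The upper bound construction is correct and complete: the family $\{A_S : S\subseteq[k-1]\}$ built from the singleton blocks $\{1\},\dots,\{k-2\}$ and the one fat block $\{k-1,\dots,n\}$ is an order-embedded copy of $\B_{k-1}$, hence $\PP_{k+1}$-free, and your observation that only the fat block can be met without being contained (so that every non-member $B$ has $|S^+|-|S^-|=1$) is exactly the point that makes the splicing argument produce a chain of $k+1$ sets. This detail is doing real work: a generic partition of $[n]$ into $k-1$ parts would \emph{not} give a saturated family, since a set meeting two fat parts without containing either would only yield short chains.

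The lower bound, however, has a genuine gap, and it is not merely a matter of unfinished bookkeeping. First, your local dichotomy is wrong: under your standing assumption that the members of $\F$ above $A$ form a single chain $C_1\subsetneq C_2\subsetneq\cdots$, every member of $\F$ above $B=A\cup\{y\}$ contains $A$ and therefore lies on that same chain, so the alternative ``an extra set above $A$ incomparable to the assumed chain'' can never be what saturation forces. What $d(B)+u(B)\ge k$ actually gives is: either some $A'\cup\{y\}$ with $A'\subsetneq A$ belongs to $\F$, or the chain above $A$ simply contains $y$ from some point upward---and in the latter case no branching whatsoever has been produced, so the induction step fails at its very first application. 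Second, even where a side branch is forced, it sits \emph{below} $A\cup\{y\}$, while your tree is supposed to grow upward from a minimal element, consuming two height levels per doubling; no mechanism is given that keeps the forced sets pairwise distinct and incomparable across iterations, and you yourself flag this as ``where the real work lies''---but that is the entire content of the bound $2^{k/2-1}$. Third, the fallback recursion $\sat(n,\PP_{k+1})\ge 2\sat(n,\PP_{k-1})$ is asserted, not derived: deleting the top and bottom height levels of a $\PP_{k+1}$-saturated family does not leave a family that is $\PP_{k-1}$-saturated in $\B_n$, because maximality can fail for sets whose addition previously created long chains only through the deleted elements. As written, you have established $\sat(n,\PP_{k+1})\le 2^{k-1}$ but not the lower bound.
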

Shortly thereafter, the upper bound was improved using an iterative construction.  
\begin{thm}[Morrison, Noel, Scott ~\cite{MNS14}] \label{MNS}
There exists $\varepsilon > 0$ such that for all $k>0$ and for $n$ sufficiently large, 
	\begin{align*}
		\sat(n, \PP_{k+1}) \leq 2^{(1-\varepsilon)k}.
	\end{align*}
\end{thm}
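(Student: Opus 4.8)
The plan is to build, for each $k$, a small $\PP_{k+1}$-saturated family by \emph{combining} saturated families for shorter chains so that the sizes multiply while the heights add, and then to iterate such a combination so that any single construction beating the trivial bound of the first theorem is amplified into an exponential saving. The natural starting point is a local reformulation of saturation. For a family $\F$ and a set $S$, let $d(S)$ and $u(S)$ be the lengths of the longest chains of $\F$ lying strictly below and strictly above $S$. Then $\F$ is $\PP_{k+1}$-saturated precisely when $d(A)+u(A)\le k-1$ for every $A\in\F$ (no $(k+1)$-chain) and $d(S)+u(S)\ge k$ for every $S\notin\F$ (adding $S$ creates one). I would phrase the whole argument through these two inequalities.

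For the combination step, identify $\B_{m+n}$ with $\B_m\times\B_n$ and, given a $\PP_{a+1}$-saturated family $\F_1\subseteq\B_m$ and a $\PP_{b+1}$-saturated family $\F_2\subseteq\B_n$, consider the product $\F_1\times\F_2$. A staircase argument shows that the longest chain in $\F_1\times\F_2$ has length $a+b-1$, so the product is $\PP_{a+b}$-free, and its size is exactly $|\F_1|\cdot|\F_2|$. If one could also guarantee that the product is \emph{saturated}, then, writing $g(k)$ for the minimum size of a $\PP_{k+1}$-saturated family (for $n$ large), one would obtain $g(a+b-1)\le g(a)\,g(b)$, and a single base family of height $k_0$ and size $s_0$ with $s_0<2^{k_0-1}$ would iterate to $g(k)\le s_0^{(k-1)/(k_0-1)}=2^{(1-\varepsilon)(k-1)}$ with $\varepsilon=1-\tfrac{\log_2 s_0}{k_0-1}>0$, which is the desired bound.

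The hard part is that the product is \emph{not} saturated. Evaluating $d+u$ in $\F_1\times\F_2$ shows that a set $S=(S_1,S_2)$ with \emph{both} coordinates outside their families satisfies $d(S)+u(S)=a+b-2$, one short of the required $a+b-1$: merging the downward and upward staircases through $S$ wastes exactly one level at the corner. Moreover this deficiency is unavoidable, since in any height-$a$ family the chains below and above an omitted $S_1$ merge through $S_1$, forcing $d(S_1)+u(S_1)=a$ for every $S_1\notin\F_1$; choosing the factors cleverly cannot help. I would therefore augment the product by a family of ``helper'' sets that fill these corners, raising $d(S)+u(S)$ to $a+b-1$ for every doubly-exterior $S$ while creating no chain of length $a+b$ among the family itself. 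The central obstacle—and the step needing the most care—is to carry out this patch with a number of helpers depending only on $a$ and $b$ and \emph{not} on $n$, since only then do the additive corrections stay lower-order and the iteration preserve an exponential saving; bounding the helper count and verifying that the augmented family is simultaneously $\PP_{a+b}$-free and saturated is where the real work lies. Granting such a bounded patch together with one explicit base family beating $2^{k_0-1}$, iterating the combination until the height reaches $k$ yields $\sat(n,\PP_{k+1})\le 2^{(1-\varepsilon)k}$ for $n$ sufficiently large.
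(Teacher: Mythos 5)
This theorem is not proved in the paper at all; it is quoted from Morrison, Noel, and Scott \cite{MNS14}, where the proof is indeed an iterative construction, so your overall strategy (combine saturated families for shorter chains so that sizes multiply while heights add, then iterate a single base example that beats $2^{k_0-1}$) is in the right spirit. However, as a proof your proposal has a genuine gap, and you have located it yourself: everything rests on the clause ``granting such a bounded patch together with one explicit base family beating $2^{k_0-1}$.'' Neither ingredient is supplied. You never exhibit a $\PP_{k_0+1}$-saturated family in $\B_n$ (for all large $n$) of size strictly less than $2^{k_0-1}$, and without that seed the iteration produces nothing better than the trivial bound. More seriously, the ``patch'' is not a routine repair: you must add helper sets that raise $d(S)+u(S)$ to $a+b-1$ for \emph{every} doubly-exterior $S=(S_1,S_2)$, of which there are exponentially many in $n$, while (i) keeping the number of added sets bounded in terms of $a$ and $b$ alone, (ii) not creating a chain of $a+b$ elements inside the augmented family, and (iii) not destroying saturation at points where the unpatched product was already tight. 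Condition (i) is in tension with the fact that the deficient corners are spread throughout $\B_{m+n}$, and your own computation shows the deficiency is structural (every omitted $S_1$ has $d(S_1)+u(S_1)$ exactly $a$), so the patch cannot be avoided by choosing the factors cleverly; it must be constructed, and no construction is given.

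Two smaller points would also need attention even if the patch existed. First, the recursion you would obtain is not the clean $g(a+b-1)\le g(a)\,g(b)$ but $g(a+b-1)\le g(a)\,g(b)+C(a,b)$; you assert the additive corrections stay lower-order under iteration, which is plausible but requires an actual estimate, since the number of iterations grows with $k$. Second, each application of the combination step is only valid for $n$ sufficiently large (both factors must themselves be saturated in lattices of the appropriate dimensions), so the quantifiers ``for all $k$'' and ``for $n$ sufficiently large'' must be tracked: for fixed $k$ only finitely many combination steps are used, so this can be repaired, but it should be said. As it stands, the proposal is a reasonable program whose hardest steps are hypotheses, not a proof.
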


We call $\PP'$ an {\em induced subposet} of $\PP$ if there exists an injective function $f: {P}' \rightarrow {P}$ such that $u \leq' v$  \emph{if} and only if $f(u) \leq f(v)$. For any family $\F\subseteq P$, the {\em poset induced  by $\F$ in $\PP$} is precisely the induced subposet $(\mathcal{F}, \leq')$ of $\PP$ where $\leq'$ is the restriction of $\leq$ to $\F\times \F$.  Because  $\PP_{k+1}$ has no incomparabilities,  $\PP_{k+1}$ is a subposet of poset $\PP$ if and only if it is an induced subposet of $\PP$. However, this is not the case in general.  

In this  paper, we introduce a notion of induced-$\PP$-saturation. In the case of the chain $\PP_k$, our definition aligns with the framework in \cite{GKLPPP13} and \cite{MNS14}. However, as might be expected, we show that this notion is substantially more involved than $\PP$-saturation for many other choices of $\PP$. 
 
\subsection{Induced Subposets}

Before we formally define the notion of induced saturation in posets, we require some terminology from poset theory.  For any poset $\PP=(P,\leq)$  and distinct $x,y\in P$, we use $x \parallel y$ to denote that $x$ is incomparable to $y$ (i.e. $x\not\leq y$ and $y\not\leq x$). We say that $x$ {\em is covered by} $y$ (also $y$ \emph{covers} $x$) if $x\leq y$ ($x\neq y$) and there is no $z\in P\setminus \{x,y\}$ such that $x\leq z\leq y$. 

The {\em Hasse diagram} of $\PP=(P, \leq)$ is a diagram where the elements in $P$ are represented as points in the plane, with $x$ lower than $y$ and a line segment drawn from $x$ upwards to $y$ if and only if $y$ covers $x$. It is frequently convenient to represent $\PP$ by a Hasse diagram.  The posets with Hasse diagrams displayed in Figure~\ref{Hasse_Diagrams} are of particular interest here. 
 
\begin{figure}[H]
\begin{center}
	\begin{tikzpicture}
		\draw [above] (0,0) node { 
			\begin{tikzpicture}
				\tikzset{vertex/.style = {shape=circle,draw,fill,minimum size=.2cm, inner sep = 0 }}
				\node[vertex] (a) at  (0,0) {};
				\path (a) ++(.5,1) node [vertex] (b) {};
				\path (a) ++(-.5,1) node [vertex] (c) {};
				\draw (b)--(a)--(c);
			\end{tikzpicture}
		};
		\draw [below] (0,0) node {$\mathcal{V}_2$};
	
		\draw [above] (3,0) node { 
			\begin{tikzpicture}
				\tikzset{vertex/.style = {shape=circle,draw,fill,minimum size=.2cm, inner sep = 0 }}
				\node[vertex] (a) at  (0,0) {};
				\path (a) ++(.5,-1) node [vertex] (b) {};
				\path (a) ++(-.5,-1) node [vertex] (c) {};
				\draw (b)--(a)--(c);
			\end{tikzpicture}
		};
		\draw [below] (3,0) node {$\Lambda_2$};

		\draw [above] (6,0) node {
			\begin{tikzpicture}[x=.7cm,y=.7cm]
				\tikzset{vertex/.style = {shape=circle,draw,fill,minimum size=.2cm, inner sep = 0 }}
				\node[vertex] (a) at  (0,0) {};
				\path (a) ++(-.5,.75)  node [vertex] (b) {};
				\path (a) ++(.5,.75)  node [vertex] (c) {};
				\path (a) ++(0,1.5)  node [vertex] (d) {};
				\draw (a)--(b)--(d)--(c)--(a);
			\end{tikzpicture}
		};
		\draw [below,align=center] (6,0) node {$\D_2$,\\ the diamond};

		\draw [above] (9,0) node {
			\begin{tikzpicture}
				\tikzset{vertex/.style = {shape=circle,draw,fill,minimum size=.2cm, inner sep = 0 }}
				\node[vertex] (a) at  (0,0) {};
				\node[vertex] (b) at  (0,1) {};
				\node[vertex] (c) at  (1,0) {};
				\node[vertex](d) at (1,1){};
				\draw (a)--(b)--(c)--(d);
				\node at (-0.3,-0.1) {$A$};
				\node at (-0.3,0.8) {$B$};
				\node at (1.3,-0.1){$C$};
				\node at (1.3,0.8) {$D$};
			\end{tikzpicture}
		};
		\draw [below] (9,0) node {$\mathcal{N}$};

		\draw [above] (12,0) node {
			\begin{tikzpicture}
				\tikzset{vertex/.style = {shape=circle,draw,fill,minimum size=.2cm, inner sep = 0 }}
				\node[vertex] (a) at  (0,0) {};
				\node[vertex] (b) at (0,1) {};
				\node[vertex] (c) at (1,1) {};
				\node[vertex] (d) at (1,0) {};
				\draw (a) --(b)--(d) -- (c) --(a);
				\node at (-0.3,-0.1) {$A$};
				\node at (-0.3,0.8) {$B$};
				\node at (1.3,-0.1){$C$};
				\node at (1.3,0.8) {$D$};
			\end{tikzpicture}
		};
		\draw [below,align=center] (12,0) node {$\bowtie$,\\ the butterfly};
	\end{tikzpicture}
	\end{center}
	\caption{The Hasse diagrams of several posets of interest}\label{Hasse_Diagrams}
\end{figure}
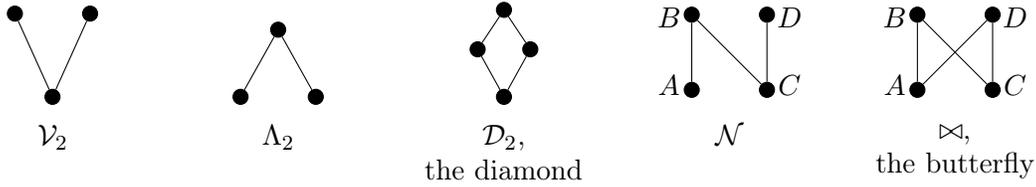

\subsection{Induced Poset Saturation} The focus of this paper is to explore the notion of poset saturation for induced subposets.  Given a host poset $\mathcal{Q}=(Q,\leq_Q)$, a target poset $\PP=(P, \leq_P
)$, and a family $\F \subseteq Q$, we say $\F$ is {\em induced-$\PP$-saturated in $\mathcal{Q}$} if the following two properties hold: 
	\begin{itemize}
		\item the poset induced by $\F$ in $\mathcal{Q}$ does not contain an induced copy of $\PP$, and 
		\item for every $\F'$ with $\F \subsetneq \F' \subseteq \mathcal{Q}$, the poset induced by $\F'$ in $\mathcal{Q}$ contains an induced copy of $\PP$.
	\end{itemize}
Given $n \geq 2$ and a poset $\PP = (P, \leq)$, let $\isat(n,\PP)$ be the minimum size of a family $\F$ that is induced-$\PP$-saturated in $\B_n$. If $\PP$ is not an induced subposet of $\B_n$, then $\isat(n,\PP)=2^n$, vacuously. Note that $\isat(n,\PP)<2^n$ when $n>|P|$, the proof of which is left to the reader.

It is worthwhile to note that induced saturation is less straightforward to define in graphs than in posets. For instance, determining a notion of induced saturation that encompasses $\overline{K}_t$, a set of $t$ pairwise nonadjacent vertices, as a potential target graph is nontrivial.  Indeed, in the natural first extension of the definition of an $H$-saturated graph to the induced setting, $\overline{K}_t$ has no ``induced" saturation number when the number of vertices in the host graph is at least $t$.  Hence induced saturation for graphs needs additional ideas, such as in \cite{MS}, where the authors utilize trigraphs \cite{ChudTriG} to address such issues.  As we discuss below, however, the natural extension of the saturation function for posets to the induced framework easily extends to antichains, which are the nearest poset analogue to independent sets in graphs.  

As is often the case with graph saturation problems, induced saturation in posets is not monotone. In particular, one cannot conclude that if $\PP$ is an induced subposet of $\mathcal{Q}$, then $\isat(n,\PP) \leq \isat(n, \mathcal{Q})$. For example, let $\PP$ be the poset with two elements, $a$ and $b$, which are incomparable. It fairly easy to see that $\isat(n,\PP) = n+1$. On the other hand, let $\mathcal{Q}$ be the poset with 3 elements, $a$, $b$, and $c$, such that $a\leq c$, but $b$ is incomparable to both $a$ and $c$. The family $\F=\{\emptyset, \{1\}, [n]\setminus \{1\}, [n]\}$ is induced-$\mathcal{Q}$-saturated in $\B_n$.

\subsection{Summary of Results}  Next, we summarize the results that will be proven in Sections~\ref{sec:upper} and~\ref{LBs}.  To begin, let $\mathcal{A}_k$ denote the $k$-antichain, which is the poset with $k$ elements in which each pair of elements is incomparable.  Note that  $\sat(n,\mathcal{A}_{k+1})= k$ since any saturated family that has no (weak) copy of $\mathcal{A}_{k+1}$ has at least $k$ elements and any family that has at least $k+1$ elements contains a (weak) copy of $\mathcal{A}_{k+1}$. 
In contrast,  the induced saturation number of the $k$-antichain is linear in $n$:

\begin{thm}\label{thm:antichain}
	If $n>k\geq 3$, then
	\begin{align*}
		3n-1 \leq \isat(n,\mathcal{A}_{k+1}) \leq (n-1)k-\left( \frac{1}{2} \log_2 k + \frac{1}{2} \log_2 \log_2 k + O(1)\right). 
	\end{align*}
    In particular, $\isat(n, \mathcal{A}_{k+1})=\Theta(n)$.
\end{thm}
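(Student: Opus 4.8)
The plan is to establish the two bounds separately, treating the lower bound $3n-1$ and the upper bound as largely independent constructions/arguments.

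\medskip

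\noindent\textbf{The upper bound.} For the upper bound I would exhibit an explicit induced-$\mathcal{A}_{k+1}$-saturated family $\F$ of the stated size. The structural idea is that a family avoids an induced $(k+1)$-antichain exactly when its maximum antichain (in the induced sense, which for antichains coincides with the weak sense) has size at most $k$. By Dilworth/Mirsky-type reasoning, such a family can be covered by $k$ chains. To be saturated, $\F$ must have the property that adding \emph{any} single set $X \notin \F$ creates a $(k+1)$-antichain; equivalently, every $X \notin \F$ must be incomparable to some $k$-antichain already present in $\F$. The natural construction is to take roughly $k$ long chains through $\B_n$ arranged so that their union has a maximum antichain of size exactly $k$, while every omitted set sits incomparably above/below enough of these chains. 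Counting the elements of such a union of chains gives the leading term $(n-1)k$; the logarithmic savings $\frac{1}{2}\log_2 k + \frac12 \log_2\log_2 k + O(1)$ should come from overlapping the chains cleverly near the middle levels of $\B_n$, where many sets can be shared. I expect the term to arise from a packing estimate on how many pairwise-incomparable ``witness'' antichains one can squeeze into a bounded number of middle levels, which is where the $\log_2 k$ and $\log_2\log_2 k$ factors naturally enter (reminiscent of covering-design or Sperner-type counting).

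\medskip

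\noindent\textbf{The lower bound.} For the lower bound $3n-1$, I would argue that any induced-$\mathcal{A}_{k+1}$-saturated family $\F$ must be large enough to ``cover'' all of $\B_n$ in the saturation sense. First I would record that $\F$ must contain both $\emptyset$ and $[n]$, or else adding a missing extremal element cannot create a new incomparability in the required way; more generally I would identify a small set of forced elements. The heart of the lower bound is a charging argument: since $n > k$, the family $\F$ by itself has all antichains of size $\le k$, yet every set outside $\F$ must, when added, complete a $(k+1)$-antichain. This forces $\F$ to contain, for each ``direction'' in which a missing set could be inserted, a rich enough supply of incomparable elements. I would partition the missing sets by level (by $|X|$) and show that accommodating the saturation condition across the $n+1$ levels of $\B_n$ requires at least roughly $3$ elements of $\F$ per level, yielding $3n - 1$ after accounting for the forced extremal elements.

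\medskip

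\noindent\textbf{The main obstacle.} The hard part will be the upper bound's logarithmic second-order term. Getting the leading $(n-1)k$ is a routine chain-counting exercise, but extracting the precise $\frac12\log_2 k + \frac12 \log_2\log_2 k + O(1)$ savings requires a delicate and essentially optimal arrangement of the witnessing antichains in the middle levels, balancing the competing demands that (i) no $(k+1)$-antichain appears inside $\F$, and (ii) every external set is blocked. I would tackle this by reducing the middle-level arrangement to a small combinatorial optimization—choosing a minimal collection of levels and a maximal overlap among the $k$ chains there—and then estimating the optimum via a Sperner/entropy counting bound, from which the $\log_2\log_2 k$ correction emerges. The $\Theta(n)$ conclusion then follows immediately by combining the two bounds, treating $k$ as a constant relative to $n$.
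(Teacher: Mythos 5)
Your proposal is a plan rather than a proof, and in both directions the decisive idea is missing. For the upper bound, the mechanism you propose --- overlapping the $k$ chains ``near the middle levels'' of $\B_n$ and extracting the $\frac{1}{2}\log_2 k + \frac{1}{2}\log_2\log_2 k$ term from a packing/entropy estimate there --- is unsubstantiated, and it is not where the savings comes from in the paper. In the paper's construction the $k$ chains remain pairwise disjoint through all the middle levels; this disjointness is exactly what blocks every middle-level set, since a set $T$ with $\frac{\ell+1}{2} \le |T| \le n-\frac{\ell+1}{2}$ outside the family is incomparable to the $k$ chain elements of its own size. The savings comes instead from the two extremes: fix $\ell$ odd with $k = \binom{\ell}{(\ell-1)/2}$, take the bottom half of $\B_\ell$ (all $S \subseteq [\ell]$ with $|S| \le \frac{\ell-1}{2}$), a shifted copy of the top half of $\B_\ell$ sitting at the top of $\B_n$, and join them by the $k$ disjoint chains $S \cup \{\ell+1,\dots,\ell+t\}$ with $|S|=\frac{\ell-1}{2}$. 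One then verifies this family has width $k$ (so it is $\mathcal{A}_{k+1}$-free) and is saturated: any small set outside it must contain an element beyond $\ell$ and is therefore incomparable to all $k$ sets of size $\frac{\ell-1}{2}$ inside $[\ell]$, and dually at the top. The family has size $2^\ell + (n-\ell)k$, and the logarithmic term is simply the inversion $\ell = \log_2 k + \frac{1}{2}\log_2\log_2 k + O(1)$ of the central binomial coefficient --- not a Sperner-type optimization in the middle levels. Without this (or an equally explicit) construction together with a verification of freeness and saturation, the upper bound is unproven.

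For the lower bound, you correctly identify the target (about three elements of $\F$ per level, plus $\emptyset$ and $[n]$, which are forced since they lie in no $(k+1)$-antichain), but you give no mechanism to force three per level, and that mechanism is the entire content of the argument. The paper proceeds as follows: by Dilworth's theorem a saturated family $\F$ of width at most $k$ is a union of $k$ disjoint chains; extend each to a maximal chain of $\B_n$ and observe that, by maximality of $\F$, the union of these maximal chains must equal $\F$ exactly, since any element on them can be added without creating a $(k+1)$-antichain. A swap argument --- if a chain passes through $A\setminus\{x\}$, $A$, $A\cup\{y\}$, replace $A$ by $A' = A\cup\{y\}\setminus\{x\}$ to obtain a $k$-chain cover of $\F \cup \{A'\}$ --- first shows each level $1,\dots,n-1$ carries at least two sets of $\F$, and then a more delicate case analysis (using $k \ge 3$ and distinguishing whether the chain through a deficient level is unique) rules out any level carrying exactly two, giving $3(n-1)+2 = 3n-1$. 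Your ``charging argument across the levels'' asserts the conclusion but contains no step that would deliver even the weaker count of two sets per level, so the lower bound is also unproven.
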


Next, note that $\sat(n, \mathcal{V}_2) = 2$ because any family that has no (weak) $\mathcal{V}_2$ has at least $2$ elements and the family $\F =\{\emptyset, \{1\}\}$ is $\mathcal{V}_2$-saturated. 
By symmetry, $\sat(n, \Lambda_2) = 2$.  
But  again the induced saturation number is linear in $n$: 

\begin{thm}\label{thm:veewedge}
	If $n\geq 2$, then
	\begin{align*}
		\isat(n,\mathcal{V}_2) = \isat(n,\Lambda_2) = n+1.
	\end{align*}
\end{thm}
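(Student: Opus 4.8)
The complementation map $S \mapsto [n]\setminus S$ is an order-reversing automorphism of $\B_n$ that carries induced copies of $\mathcal{V}_2$ to induced copies of $\Lambda_2$ and preserves the saturation property, so it induces a bijection between induced-$\mathcal{V}_2$-saturated families and induced-$\Lambda_2$-saturated families of the same size. Hence $\isat(n,\mathcal{V}_2)=\isat(n,\Lambda_2)$, and I would prove everything for $\mathcal{V}_2$. For the upper bound I would exhibit the maximal chain $C_0\subsetneq C_1\subsetneq\cdots\subsetneq C_n$ with $C_i=\{1,\dots,i\}$, which has $n+1$ elements and is trivially $\mathcal{V}_2$-free. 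To see it is saturated, take any $X$ not on the chain, let $C_i$ be the largest chain element with $C_i\subseteq X$, and note $C_{i+1}$ exists (since $C_n=[n]\supseteq X$ would force $X=[n]$); then $C_i\subsetneq X$, $C_i\subsetneq C_{i+1}$, and $X\parallel C_{i+1}$ (otherwise $C_i\subseteq X\subseteq C_{i+1}$ pins $X$ to a chain element), so $(C_i;X,C_{i+1})$ is an induced $\mathcal{V}_2$. This gives $\isat(n,\mathcal{V}_2)\le n+1$.

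\textbf{Structural facts for the lower bound.} An induced $\mathcal{V}_2$ in a family is a triple $A\subsetneq B$, $A\subsetneq C$ with $B\parallel C$; thus $\F$ is $\mathcal{V}_2$-free if and only if, for each $A\in\F$, the members of $\F$ strictly above $A$ form a chain, equivalently each member of $\F$ has at most one upper cover in $\F$. Since $[n]$ is comparable to every set it can lie in no induced $\mathcal{V}_2$, so saturation forces $[n]\in\F$; being the global maximum, $[n]$ is then the unique maximal element of $\F$, and the ``at most one upper cover'' property makes the Hasse diagram of $\F$ a tree rooted at $[n]$ (branching only downward).

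\textbf{The case $\emptyset\in\F$.} Here the up-set of $\emptyset$ is all of $\F\setminus\{\emptyset\}$, which must be a chain, so $\F$ is a single chain from $\emptyset$ to $[n]$. If this chain were not maximal in $\B_n$ there would be a set $X$ with $S_j\subsetneq X\subsetneq S_{j+1}$ for consecutive members $S_j,S_{j+1}$; adding $X$ could create no induced $\mathcal{V}_2$, since its $\F$-supersets form a chain (so $X$ cannot be a bottom) and every chain member is comparable to $X$ (so $X$ cannot be an incomparable top), contradicting saturation. Hence $\F$ is a maximal chain and $|\F|=n+1$.

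\textbf{The case $\emptyset\notin\F$ (the main obstacle).} Here I would take a minimum-cardinality member $m$, $s=|m|\ge 1$. For each $j\in m$ the set $m\setminus\{j\}$ is not in $\F$ (minimality), and adding it forces an induced $\mathcal{V}_2$; since no member lies strictly below it, it must serve as a \emph{bottom}, producing a witness $C_j\in\F$ with $m\setminus\{j\}\subseteq C_j$ and $C_j\parallel m$, whence $C_j\cap m=m\setminus\{j\}$. These $C_j$ are pairwise distinct and distinct from $m$, giving $1+s$ members; adjoining the $a$ proper ancestors of $m$ along its (unique) upward chain to $[n]$ yields at least $1+s+a$ members. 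When this ancestor chain happens to be a maximal chain of $\B_n$ we have $a=n-s$ and the count is exactly $n+1$. The crux is the remaining subcase, where the ancestor chain has a gap of size $g\ge 2$: each set inserted into such a gap is forced (as above, it cannot be a bottom) to play the role of a \emph{top}, producing witness members off the chain that should compensate for the size deficiency $g-1$. The hard part is to make this compensation rigorous---bounding the number of forced witnesses from below and guaranteeing they are not already counted among $m$, the $C_j$, or the ancestors---and I expect to carry it out either by induction on $n$ or by a discharging argument on the tree $\F$, after which the case $\emptyset\notin\F$ also gives $|\F|\ge n+1$; the dual statement for $\Lambda_2$ then follows from the complementation bijection above.
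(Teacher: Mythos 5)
Your upper bound, the complementation duality, the observation that saturation forces $[n]\in\F$, and the case $\emptyset\in\F$ are all correct. The problem is that the case $\emptyset\notin\F$ --- which is where essentially all of the content of the lower bound lives --- is not proved. Your count of $1+s+a$ members (the minimum-size member $m$, its $s$ witnesses $C_j$, and its $a$ ancestors) reaches $n+1$ only when the ancestor chain of $m$ happens to be saturated in $\B_n$. When that chain skips levels you must manufacture, for each gap of size $g$, at least $g-1$ additional members of $\F$ that are provably distinct from $m$, from the $C_j$, from the ancestors, and from the witnesses attached to other gaps (and the tree below $[n]$ can have many minimal elements and many gaps interacting). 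You explicitly defer exactly this step (``induction on $n$ or a discharging argument''), so what you have is a plan, not a proof. Worse, the witnesses a gap set $X$ produces are only guaranteed to be incomparable to $X$ and to sit above some member below $X$; nothing prevents them from coinciding with the $C_j$ or with witnesses of other gap sets, so the naive count does not close.

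It is worth seeing how the paper sidesteps this case entirely, because the trick is structural rather than computational. The paper works with $\Lambda_2$ (equivalently, by your duality, with $\mathcal{V}_2$ and the forced element $[n]$): saturation forces $\emptyset\in\F$, and then one shows that an induced-$\Lambda_2$-saturated family is automatically induced-$\D_2$-saturated --- a diamond contains an induced $\Lambda_2$ on its top three elements, and for any $S\notin\F$ the forced copy of $\Lambda_2$ in $\F\cup\{S\}$ cannot use $\emptyset$, so adjoining $\emptyset$ to it yields an induced diamond. The bound then follows from Lemma~\ref{diamondempty}: any induced-$\D_2$-saturated family containing $\emptyset$ or $[n]$ has at least $n+1$ members, proved via an injection $\phi:[n]\to\F\setminus\{\emptyset\}$, where for $\{x\}\notin\F$ one takes a forced diamond $\D_2(\emptyset,\{x\},B,T)$ and shows, after an extremal choice of $(B,T)$, that one may assume $T=B\cup\{x\}$, making $x\mapsto T$ injective. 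All of the counting difficulty you face is concentrated in, and resolved by, that claim; and note that in your $\mathcal{V}_2$ formulation the element you actually possess, namely $[n]$, is precisely what the dual form of Lemma~\ref{diamondempty} requires, so no split on whether $\emptyset\in\F$ is ever needed. To complete your approach you would either have to prove your compensation claim --- which I would expect to be at least as delicate as the paper's Lemma~\ref{diamondempty} --- or adopt this reduction to diamond saturation.
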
  

The diamond ($\D_2$ in Figure~\ref{Hasse_Diagrams}) further emphasizes the distinction between saturation numbers and induced saturation numbers. 
We note that $\sat(n, \D_2) = 3$ since any $\D_2$-saturated family has at least $3$ elements and the family  $\F= \{\emptyset,\{1\},[n]\}$ is $\D_2$-saturated. 
However,  in the induced setting, we have Theorem~\ref{thm:diamond}:

\begin{thm}\label{thm:diamond}
	If $n \geq 2$, then
	\begin{align*}
		\lceil \log_2 n \rceil \leq \isat(n,\D_2)\leq n+1.
	\end{align*}
\end{thm}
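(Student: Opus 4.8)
The plan is to establish the two bounds separately, beginning with the easier upper bound and then developing the more delicate lower bound.

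\medskip

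\noindent\textbf{The upper bound.} To show $\isat(n,\D_2)\leq n+1$, I would exhibit an explicit induced-$\D_2$-saturated family of size $n+1$. The natural candidate is the maximal chain $\F=\{\emptyset,\{1\},\{1,2\},\dots,[n]\}$, which has $n+1$ elements. A chain clearly induces no diamond, since $\D_2$ contains incomparable elements while a chain has none. The work is to verify the saturation condition: for every $X\subseteq[n]$ with $X\notin\F$, the family $\F\cup\{X\}$ must induce a copy of $\D_2$. Given such an $X$, let $i$ be largest with $\{1,\dots,i\}\subsetneq X$ (i.e.\ the chain element immediately below $X$ that $X$ contains) and $j$ smallest with $X\subsetneq\{1,\dots,j\}$; one checks $i<j-1$ (otherwise $X$ would be a chain element or comparable to its neighbors in a way forcing $X\in\F$), and then one wants $\{1,\dots,i\}$, $X$, some intermediate chain element, and $\{1,\dots,j\}$ to form a diamond. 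The intended configuration is $\{1,\dots,i\}$ below both $X$ and $\{1,\dots,i+1\}$, with both below $\{1,\dots,j\}$, while $X\parallel\{1,\dots,i+1\}$; this is exactly $\D_2$. The routine case analysis confirming these comparabilities and incomparabilities is what makes the upper bound go through.

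\medskip

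\noindent\textbf{The lower bound.} For $\isat(n,\D_2)\geq\lceil\log_2 n\rceil$, I would argue by contradiction: suppose $\F$ is induced-$\D_2$-saturated with $|\F|<\lceil\log_2 n\rceil$, hence $|\F|\leq\log_2 n$, so $2^{|\F|}<n$... more precisely I expect the key is a counting or dimension argument. The idea is that each element $X\in\B_n$ not in $\F$ must, upon being added, complete an induced diamond using three elements of $\F$. One assigns to each such $X$ a ``signature'' recording how $X$ sits relative to $\F$ — for each $F\in\F$, whether $X\subseteq F$, $X\supseteq F$, or $X\parallel F$ — and the saturation requirement constrains these signatures. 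The plan is to show that elements sharing the same relationship-pattern to all of $\F$ behave identically with respect to diamond-completion, so that the number of ``types'' of points outside $\F$ is bounded by a function exponential in $|\F|$; if $|\F|$ is too small, two genuinely different subsets of $[n]$ collapse to the same type, and one of them can then be added without creating a diamond, contradicting saturation. Carrying this out requires identifying the right notion of type and showing it is preserved under addition.

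\medskip

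\noindent\textbf{Main obstacle.} The hard part will be the lower bound, specifically pinning down why the relevant type-count is bounded by roughly $2^{|\F|}$ rather than $3^{|\F|}$, and ensuring that two elements of the same type are truly interchangeable — i.e.\ that if $X$ completes a diamond with $\{F_1,F_2,F_3\}\subseteq\F$ then any $Y$ of the same type completes a diamond with the same triple. A subtlety is that completing an induced diamond requires not only the three comparabilities of $\D_2$ but also that no unwanted comparabilities among the four chosen elements destroy the induced structure, so the type must be fine enough to capture this yet coarse enough to force a collision when $|\F|<\lceil\log_2 n\rceil$. Reconciling these competing demands, and matching the precise logarithmic threshold, is where the real argument lies; the upper-bound chain construction, by contrast, should be essentially a direct verification.
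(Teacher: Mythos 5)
Your upper bound is correct and is exactly the paper's construction: a single maximal chain, which is the $k=1$ case of the paper's Proposition~\ref{fullchains}, and your verification that adding any $X\notin\F$ creates the induced diamond $\D_2(\{1,\dots,i\},\,X,\,\{1,\dots,i+1\},\,\{1,\dots,j\})$ is sound.

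The lower bound, however, has a genuine gap: the type-collision argument cannot yield a contradiction, even in principle. Suppose $X\neq Y$ lie outside $\F$ and have the same signature, i.e.\ the same relation ($\subseteq$, $\supseteq$, or $\parallel$) to every $F\in\F$. Saturation guarantees that $\F\cup\{X\}$ contains an induced diamond whose other three elements $A,B,D$ lie in $\F$ (since $\F$ itself is diamond-free); but then, precisely because $Y$ bears the same relations to $A$, $B$, and $D$ as $X$ does, and the relations among $A,B,D$ are unchanged, $Y$ completes an induced diamond with the same triple. So both $X$ and $Y$ create diamonds when added; saturation is satisfied by both, and nothing breaks. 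More fundamentally, counting signatures of \emph{subsets} cannot lower-bound $|\F|$ at all: distinct subsets may freely share a signature, so no pigeonhole argument applies to the $2^n-|\F|$ sets outside $\F$.

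What is missing is the idea of applying the signature at the level of \emph{ground-set points} rather than subsets. The paper shows (Lemmas~\ref{logbnd} and~\ref{logbnd2}) that any induced-$\D_2$-saturated family must \emph{separate} every pair $x,y\in[n]$, meaning some $F\in\F$ contains exactly one of them; then $x\mapsto\{F\in\F:x\in F\}$ is injective, forcing $2^{|\F|}\geq n$, i.e.\ $|\F|\geq\lceil\log_2 n\rceil$ (the paper phrases this via the biclique cover number of $K_n$). The real work is the separation step: if no $F\in\F$ separates $x$ and $y$, one reduces by poset duality to the case where some member of $\F$ contains both, takes $S$ minimal with $S\cup\{x,y\}\in\F$, observes that $S\cup\{x\}\notin\F$ so adding it creates an induced copy of $\D_2$, and then shows this copy persists when $S\cup\{x\}$ is replaced by $S\cup\{x,y\}$ --- exhibiting an induced diamond inside $\F$ itself, a contradiction. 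This replacement step is exactly where the structure of the diamond (in the paper, the Unique Cover Twin Property) is used, and it has no analogue in your outline; without it, the approach cannot reach the logarithmic bound.
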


We note that $\sat(n,\mathcal{N})=3$, as any $\mathcal{N}$-saturated family has at least $3$ elements, and the family  $\F=\{\emptyset,\{1\},[n]\}$ is $\mathcal{N}$-saturated. 
On the other hand, we prove Theorem~\ref{thm:theN}:

\begin{thm}\label{thm:theN}
	If $n \geq 3$, then
	\begin{align*}
		\lceil \log_2 n \rceil \leq \isat(n,\mathcal{N})\leq 2n.
	\end{align*}
\end{thm}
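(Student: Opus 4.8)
The plan is to prove the two bounds separately, beginning with the upper bound $\isat(n,\mathcal{N})\le 2n$. I would first record the structural fact that $\emptyset,[n]\in\F$ for every induced-$\mathcal{N}$-saturated family $\F$: since no element of $\mathcal{N}$ is comparable to all three of the others, while $\emptyset$ (resp.\ $[n]$) is below (resp.\ above) everything, neither can ever play a role in an induced copy of $\mathcal{N}$, so adding it could never create one. Hence any construction must contain $\emptyset$ and $[n]$, and I would build $\F$ as $\{\emptyset,[n]\}$ together with at most $2n-2$ carefully chosen ``middle'' sets. A pure chain is induced-$\mathcal{N}$-free but far from saturated, because every three-element induced subposet of $\mathcal{N}$ contains an incomparable pair; the construction must therefore enrich a chain-like spine with enough incomparable partners so that every non-member sees two suitably placed incomparable sets of $\F$. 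The two things to verify are that $\F$ is induced-$\mathcal{N}$-free and that every $X\notin\F$ completes an induced $\mathcal{N}$, where $X$ is assigned one of the four roles of $\mathcal{N}$ according to whether it lies above, below, or incomparable to the relevant members of $\F$.

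For the lower bound $\isat(n,\mathcal{N})\ge\lceil\log_2 n\rceil$, the plan is to show that $\F$ must \emph{separate} the ground set, i.e.\ that the map $i\mapsto\{F\in\F:i\in F\}$ from $[n]$ into $2^{\F}$ is injective; this gives $n\le 2^{|\F|}$, hence $|\F|\ge\lceil\log_2 n\rceil$. The engine is an impersonation lemma: no non-member $X$ can share the comparability signature of a member $F_0\in\F$, where the signature records, for each element of $\F$, whether it lies above, below, or incomparable to the given set. Indeed, if $X$ completed an induced $\mathcal{N}$ on $\{X,P,Q,R\}$ with $P,Q,R\in\F$, then either $F_0\notin\{P,Q,R\}$, so that replacing $X$ by $F_0$ yields an induced $\mathcal{N}$ inside $\F$, a contradiction; or $F_0\in\{P,Q,R\}$, so that $X$ and $F_0$ occupy two positions of $\mathcal{N}$ having identical relations to the complementary pair. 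A direct case check of the six pairs shows that no two elements of $\mathcal{N}$ have the same relations to the other two, ruling out the second case. In particular, if a singleton $\{i\}$ lies in $\F$, no other coordinate can share its column $\{F:i\in F\}$.

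The crux is upgrading this impersonation lemma to full separation, since two coordinates $i,j$ can a priori be indistinguishable by $\F$ while both singletons lie outside $\F$. Here I would exploit the transposition $\sigma=(i\,j)$: if $\F$ fails to separate $i$ and $j$, then every member of $\F$ is fixed by $\sigma$, so $\F$ lives in the sublattice of $\sigma$-invariant sets, which is isomorphic to $\B_{n-1}$, and restricting to $\sigma$-invariant additions shows that $\F$ is again induced-$\mathcal{N}$-saturated there. An induction on $n$ then yields $\isat(n,\mathcal{N})\ge\min\{\lceil\log_2 n\rceil,\isat(n-1,\mathcal{N})\}$, which already forces $|\F|$ to grow logarithmically. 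The delicate point—and the step I expect to be the main obstacle—is closing the last additive gap to $\lceil\log_2 n\rceil$: this requires showing that an \emph{asymmetric} addition, a set containing exactly one of $i$ and $j$, must still create an induced $\mathcal{N}$, which is precisely what prevents the family from collapsing onto too few distinguishable coordinates. The analogous main obstacle on the constructive side is verifying saturation of the $2n$-element family, namely that each ``sandwiched'' middle set genuinely completes a copy of $\mathcal{N}$.
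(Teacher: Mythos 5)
Your proposal leaves both halves of the theorem unproved, so there are genuine gaps on each side. On the upper bound, your observation that $\emptyset$ and $[n]$ must belong to every induced-$\mathcal{N}$-saturated family is correct, but you never exhibit a $2n$-element family, and the explicit construction together with its verification is the entire content of this half. The paper takes $\F=\{\emptyset\}\cup\{\{i\}:i\in[n]\}\cup\{\{1,2,\ldots,i\}:i\in[n]\}$ (Proposition~\ref{UB:TheN}) and verifies both properties directly: in any induced $\mathcal{N}(A,B,C,D)$, the incomparability of $A$ and $C$ forces $|A|,|C|\geq 1$, hence $|B|,|D|\geq 2$, but every member of $\F$ of size at least $2$ lies on one chain, contradicting $B\parallel D$; and for $S=\{i_1,\ldots,i_k\}\notin\F$ with $t$ the least element of $[n]\setminus S$, either $\mathcal{N}(\{t\},\{1,\ldots,t\},\{1\},S)$ (if $t\neq 1$) or $\mathcal{N}(\{1\},\{1,\ldots,i_1\},\{i_1\},S)$ (if $t=1$) is induced. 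Nothing in your sketch of a ``chain-like spine with incomparable partners'' substitutes for this; as you yourself note, verifying saturation is the obstacle, and it is left open.

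On the lower bound, your reduction from separation to $|\F|\geq\lceil\log_2 n\rceil$ is fine --- injectivity of the column map gives $n\leq 2^{|\F|}$, which is the same counting that underlies the paper's biclique-cover bound $\bc(K_n)=\lceil\log_2 n\rceil$ --- but your route to separation does not close. The merging induction $\isat(n,\mathcal{N})\geq\min\{\lceil\log_2 n\rceil,\,\isat(n-1,\mathcal{N})\}$ unwinds to $|\F|\geq\lceil\log_2 m\rceil$ where $m$ is wherever the recursion terminates; without further input, $m$ can a priori be a constant, so the bound degenerates and yields nothing like $\lceil\log_2 n\rceil$. The step you flag as ``the main obstacle'' --- that an asymmetric addition $S\cup\{x\}$ forces a contradiction --- is precisely the paper's engine, and it is genuinely nontrivial: in Lemmas~\ref{logbnd} and~\ref{logbnd2}, one picks $S$ of minimum size with $S\cup\{x,y\}\in\F$, adds $S_x=S\cup\{x\}$, and shows the induced copy of $\PP$ thereby created survives when $S_x$ is replaced by $S_{xy}=S\cup\{x,y\}\in\F$, a contradiction; the only hard case is when $S_{xy}$ itself is the unique cover of $S_x$ in the copy, and this is exactly where the Unique Cover Twin Property of $\mathcal{N}$ (Definition~\ref{UCTP}) is invoked. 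The dual lemma then forces $\F=\emptyset$, which is absurd, so every pair is separated. Your impersonation lemma, though correct in itself (the rigidity check that no two elements of $\mathcal{N}$ bear identical relations to the complementary pair is right), concerns comparability signatures of \emph{sets} and never connects to membership columns of \emph{points}, so it cannot carry the separation claim. In sum, the proposal identifies the right skeleton --- separation implies the logarithmic bound, and a $2n$ construction exists --- but supplies neither the construction nor the separation argument.
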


The last particular poset we would like to highlight is the butterfly ($\bowtie$ in Figure \ref{Hasse_Diagrams}). 
We note that $\sat(n,\bowtie) = 4$ as any $\bowtie$-saturated family has at least $3$ elements and a little more work gives a lower bound of 4. For the upper bound, the family  $\F = \{\emptyset, \{1\}, [n]\setminus \{1\}, [n]\}$ is $\bowtie$-saturated. 
However, Theorem~\ref{thm:butterfly} indicates different behavior for the induced saturation number:

\begin{thm} \label{thm:butterfly} If $n\geq 3$, then 
\begin{align*}
		\lceil \log_2 n \rceil \leq \isat(n, \bowtie) \leq \binom{n}{2} +2n - 1.
	\end{align*}
\end{thm}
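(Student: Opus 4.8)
The plan is to prove the two bounds separately, starting with the constructive upper bound and then deriving the lower bound from a separation argument. Throughout I label the butterfly $\bowtie$ by its two minimal elements $A,C$ and two maximal elements $B,D$, with all four relations $A<B$, $A<D$, $C<B$, $C<D$ holding and $A\parallel C$, $B\parallel D$.

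For the upper bound I would exhibit an explicit family of the stated size. Let $\F$ consist of every subset of $[n]$ of size at most $2$ together with the initial segments $C_k=\{1,2,\dots,k\}$ for $3\le k\le n$. Then $|\F|=\left(1+n+\binom{n}{2}\right)+(n-2)=\binom{n}{2}+2n-1$, as required. To see that $\F$ is induced-$\bowtie$-free, I would observe that any two incomparable members of $\F$ meet in at most one element: two initial segments are comparable, and any other incomparable pair involves a set of size at most $2$ that is not contained in the other, forcing an intersection of size at most $1$. Consequently no two sets can lie strictly below two incomparable members of $\F$ (the sets below both form a chain inside a one-element set), so $\F$ contains no induced $\bowtie$.

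The substantive part of the upper bound is saturation. The sets missing from $\F$ are exactly the non-initial-segments $Y$ with $|Y|\ge 3$. For such a $Y$ I would put $k=\max(Y)-1$; then $C_k\in\F$, the set $C_k$ is incomparable to $Y$ (it omits $\max(Y)\in Y$, and since $Y$ is not a prefix, $Y$ omits some element of $C_k$), and $|C_k\cap Y|\ge 2$. Picking $a,b\in C_k\cap Y$, the singletons $\{a\},\{b\}$ as minimal elements together with the incomparable pair $C_k,Y$ as maximal elements form an induced butterfly in $\F\cup\{Y\}$. This step is what dictates the construction: the initial-segment chain is precisely the device that supplies, for every missing set, an incomparable partner in $\F$ sharing two ground elements.

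For the lower bound the plan is to show that every induced-$\bowtie$-saturated family $\F$ must be a \emph{separating} family for $[n]$; since the membership vectors of the $n$ ground elements over $\F$ are then distinct, one gets $|\F|\ge\lceil\log_2 n\rceil$. Suppose toward a contradiction that elements $i,j$ are inseparable, meaning every $F\in\F$ contains both or neither. Then $\{i\},\{j\}\notin\F$, and adding $\{i\}$ must create an induced $\bowtie$; because the only set strictly below a singleton is $\emptyset$, the set $\{i\}$ can only be a minimal element, so there are incomparable $U,V\in\F$ with $i\in U\cap V$ and a nonempty $W\in\F$ with $W\subsetneq U\cap V$ and $i\notin W$. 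Inseparability gives $j\in U\cap V$ and $j\notin W$, so if $\{i,j\}$ were in $\F$ then $\{i,j\},W,U,V$ would already be an induced butterfly inside $\F$, a contradiction; hence $\{i,j\}\notin\F$ as well. The main obstacle is finishing from here: one must exhibit a single set whose insertion creates \emph{no} induced $\bowtie$, contradicting saturation. I expect to do this by an extremal choice, taking a minimal member of $\F$ containing $i$ and deleting $j$ from it, and then using inseparability together with the butterfly-freeness of $\F$ to rule out every way the inserted set could serve as a minimal or maximal element of a butterfly. Converting inseparability into the guaranteed absence of a saturating witness is the delicate case analysis at the heart of the lower bound.
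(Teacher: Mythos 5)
Your upper bound is complete, correct, and is the paper's own construction: your family (all sets of size at most two plus the initial segments) is exactly the family $\F=\{\emptyset\}\cup\F_1\cup\F_2\cup\F_c$ of Proposition~\ref{UB:Butterfly}, and your saturation witness differs from the paper's only in which initial segment and which two singletons are chosen. Your freeness argument (any two incomparable members of $\F$ meet in at most one point, so no two incomparable sets can lie below two incomparable members) is correct and, if anything, slightly cleaner than the paper's.

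The lower bound, however, has a genuine gap, and you flag it yourself. From an inseparable pair $\{i,j\}$ you establish only preliminaries: $\{i\},\{j\},\{i,j\}\notin\F$, and the existence of incomparable $U,V\in\F$ containing $i$ (hence $j$). The contradiction itself --- the step that makes the bound true --- is deferred to a ``delicate case analysis'' that you describe but never perform. This is precisely where the paper invests its effort: Lemmas~\ref{logbnd} and~\ref{logbnd2}, which are the content behind the general Theorem~\ref{thm:phat}, exist exactly to carry out this step. Moreover, your stated target (``exhibit a single set whose insertion creates no induced $\bowtie$'') is better reorganized as a swap argument: one does not verify directly that no butterfly appears; one shows that any butterfly that does appear can be converted into one lying entirely inside $\F$, contradicting butterfly-freeness. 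For the record, your extremal choice does succeed when run this way. Since $U\in\F$ contains $i$, an inclusion-minimal $M\in\F$ with $i\in M$ exists, and $j\in M$; insert $M'=M\setminus\{j\}\notin\F$. Any induced butterfly in $\F\cup\{M'\}$ must use $M'$. If $M'$ is a maximal element, with incomparable partner $D$ and minimal elements $A,C$, replace $M'$ by $M$: then $A,C\subsetneq M$ still, and $M\parallel D$ because $M\subseteq D$ would force $M'\subseteq D$, while $D\subseteq M$ together with $D\not\subseteq M'$ forces $j\in D$, hence $i\in D$, hence $D=M$ by minimality of $M$, contradicting $D\parallel M'$. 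If $M'$ is a minimal element, with partner $C$ and maximal elements $B,D$, then $B,D\in\F$ contain $i$, hence $j$, hence contain $M$, and properly so (equality of either with $M$ would violate $B\parallel D$); and $C\parallel M$ follows from minimality of $M$ by the same two-step check. Either way $\F$ contains an induced butterfly, a contradiction; so $\F$ separates every pair, and your membership-vector count then gives $|\F|\geq\lceil\log_2 n\rceil$.

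It is worth noting what the two routes buy. The paper proves the separation property once for every poset in ${\mathbb P}$ (those with the Unique Cover Twin Property), of which $\bowtie$ is a member; the UCTP hypothesis is needed there to handle the case where the inserted set has a unique cover inside the created copy. Your specialization to $\bowtie$, once completed as above, dodges that case entirely --- in the butterfly every minimal element has two covers --- so the direct argument is genuinely shorter, but it yields only the butterfly, whereas the paper's lemmas simultaneously give the lower bounds for $\D_2$, $\mathcal{N}$, and all of ${\mathbb P}$.
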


In each of the above instances, we remark that $\sat(n,\PP)$ is bounded by a constant, while $\liminf_{n\rightarrow\infty} \isat(n, \PP)=\infty$. 
However, unlike $\mathcal{A}_{k+1}$, $\mathcal{V}_2$,  and $\Lambda_2$, we do not know the growth rate of  $\isat(n,\PP)$    for the diamond, $\mathcal{N}$, or the butterfly. 

Next we define an infinite class of posets ${\mathbb P}$ and show in Theorem \ref{thm:phat} that the induced saturation number of each $\PP\in{\mathbb P}$ is at least logarithmic in $n$.   This class includes the diamond, $\mathcal{N}$, and the butterfly, so that the lower bounds in Theorems \ref{thm:diamond} - \ref{thm:butterfly} all follow from Theorem \ref{thm:phat}. 

\begin{definition}[Unique Cover Twin Property]\label{E-UCTP}\label{phat}\label{UCTP}
	Let $\PP = (P,\leq)$ be a poset. We say that $\PP$ has the {\em Unique Cover Twin Property} (UCTP)  if for every element $S \in P$ that has precisely one cover $T \in P$ there exists $S' \in  P$, $S'\neq S$, such that $T$ also covers $S'$. Let ${\mathbb{P}}$ denote the collection of all posets that have at least 2 elements and have UCTP. 
\end{definition}

Notice that all posets in Figure~\ref{Hasse_Diagrams} have UCTP.
Our main result, Theorem~\ref{thm:phat}, provides a logarithmic lower bound on the saturation numbers of all non-trivial posets with UCTP.

\begin{thm} \label{thm:phat}
	If $\PP\in {\mathbb P}$, then 
	\begin{align*}
		\lceil \log_2 n \rceil \leq \isat(n,\PP).
	\end{align*}
\end{thm}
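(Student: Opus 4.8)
The plan is to reduce Theorem~\ref{thm:phat} to a \emph{separation} statement about saturated families and then finish with an elementary encoding/biclique‑cover count. Say that $\F$ \emph{separates} a pair $k,l\in[n]$ if some $F\in\F$ satisfies $|F\cap\{k,l\}|=1$. If $\F$ separates every pair, then the incidence vectors $\chi(k):=(\mathbf{1}[k\in F])_{F\in\F}\in\{0,1\}^{|\F|}$, for $k\in[n]$, are pairwise distinct, so $n\le 2^{|\F|}$ and hence $|\F|\ge\lceil\log_2 n\rceil$. Equivalently, each $F\in\F$ spans the biclique with parts $F$ and $[n]\setminus F$, and ``separates every pair'' says precisely that these bicliques cover all edges of $K_n$; since the biclique cover number $\bc(K_n)=\lceil\log_2 n\rceil$, the bound follows. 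Thus it suffices to prove the following.
\begin{cl}
If $\F$ is induced-$\PP$-saturated in $\B_n$ with $\PP\in\mathbb{P}$, then $\F$ separates every pair of elements of $[n]$.
\end{cl}

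To prove the Claim I would argue by contraposition. Suppose some pair $i,j$ is \emph{not} separated; then $\tau=(i\ j)$ is an automorphism of $\B_n$ fixing every $F\in\F$ setwise, and each $F\in\F$ contains both or neither of $i,j$. I will contradict saturation by exhibiting a set whose addition keeps $\F$ induced-$\PP$-free. Fix a split set $X$ (say $i\in X$, $j\notin X$) and put $A=X\setminus\{i\}$ and $B=X\cup\{j\}$, both $\tau$-fixed, with $A\subsetneq X\subsetneq B$. Because every member of $\F$ is $\tau$-fixed, one checks that the elements of $\F$ lying below $X$ are exactly those below $A$, the elements above $X$ are exactly those above $B$, and the twin split set $\tau X$ has the same comparabilities to $\F$ as $X$. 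Consequently, replacing $X$ by $A$ preserves the down-set in $\F$ while only possibly enlarging the up-set, and replacing $X$ by $B$ preserves the up-set while only possibly enlarging the down-set.

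Now assume toward a contradiction that $\F$ is saturated, so adding $X$ creates an induced copy of $\PP$ on $\{X\}\cup G$ with $G\subseteq\F$, in which $X$ plays some element $p\in P$. By the surgery above, the poset induced by $\{A\}\cup G$ (resp.\ $\{B\}\cup G$) is obtained from $\PP$ by adding only relations that push $p$ downward (resp.\ upward) relative to $G$. The goal is to use these two moves to relocate the copy onto a set already in $\F$—producing $\PP\subseteq\F$ and contradicting $\PP$-freeness—or else to conclude that some admissible addition creates no copy at all. This is exactly where UCTP enters: when $p$ has at least two covers (or is maximal) in the copy, the corresponding move can be performed without destroying inducedness; the delicate situation is when $p$ has a \emph{unique} cover $T$ in the copy, and here UCTP furnishes a second element $S'\neq p$ of $\PP$ also covered by $T$, i.e.\ an element of $G$ beneath the same cover, which is precisely what is needed to re-route the cover of the relocated vertex and keep the copy induced.

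The main obstacle is the Claim itself, and within it the relocation argument: one must make the two surgeries precise and guarantee that the process terminates at a set of $\F$, so that the contradiction is with $\PP$-freeness rather than merely producing yet another saturating addition. Getting the case analysis on the role of $p$ correct—maximal, unique cover, several covers—and verifying that UCTP rules out exactly the one configuration (a private cover edge $S\lessdot T$ in which $S$ is the only element below $T$ and $T$ the only element above $S$) that would otherwise allow an unseparated pair to coexist with saturation, is the heart of the proof. By contrast, once separation is established the logarithmic bound is immediate from the binary-encoding/biclique-cover count above, so essentially all of the difficulty lies in the Claim.
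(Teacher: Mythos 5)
Your reduction to the separation claim and the final count are exactly the paper's route (Proposition~\ref{distlb} and Corollary~\ref{dkn}), but your proof of the Claim itself has a genuine gap: the surgery on an \emph{arbitrary} split set $X$ does not preserve inducedness, and you never supply the mechanism that makes the relocation work. Concretely, suppose $\{i,j\}$ is unseparated and $n\geq 3$; then $F=\{i,j\}$ is permitted in $\F$ (it meets $\{i,j\}$ in both elements), and for the split set $X=\{i,m\}$ we have $F\parallel X$ yet $F\subsetneq B=X\cup\{j\}$. So ``replacing $X$ by $B$'' can convert incomparabilities of the created copy into strict containments (and dually for $A=X\setminus\{i\}$: take $F=\{m,q\}$ with $q\notin\{i,j\}$), destroying the induced copy rather than relocating it. Your sketch concedes this is delicate and appeals to UCTP to ``re-route'' covers, but UCTP only speaks about unique covers; it cannot repair new comparabilities with elements that were incomparable to the added set, and you give no argument that the relocation process terminates at a member of $\F$.

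The missing idea is an extremal choice that makes a \emph{single} move clean. The paper first shows (Lemma~\ref{logbnd}) that an unseparated pair $\{x,y\}$ forces $|F\cap\{x,y\}|=0$ for all $F\in\F$: assuming some member contains both, pick $S$ of \emph{minimum} size with $S_{xy}:=S\cup\{x,y\}\in\F$ and add $S_x:=S\cup\{x\}\notin\F$. Minimality of $S$ is precisely what kills the counterexample above: any $F=S'\cup\{x,y\}\in\F$ has $|S'|\geq|S|$, so $F\parallel S_x$ forces $F\parallel S_{xy}$, and comparabilities below $S_x$ transfer verbatim. UCTP is then used only to exclude the one degenerate collision in which the unique cover of $S_x$ in the created copy is $S_{xy}$ itself: the second element $U$ under that cover contradicts either the cover relation (if $U$ avoids $\{x,y\}$) or the minimality of $S$ (if $U$ contains $\{x,y\}$). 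Hence the copy relocates onto $S_{xy}\in\F$, contradicting $\PP$-freeness. A dual argument (Lemma~\ref{logbnd2}, applied to complements after verifying that the dual poset also has UCTP) forces $\{x,y\}\subseteq F$ for all $F\in\F$, so $\F=\emptyset$, which is impossible since $\PP$ has at least two elements and the empty family cannot be saturated. Without the minimal choice of $S$, the dualization step, and this nonemptiness observation, your relocation plan does not close.
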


The proof of this result appears in Section~\ref{LBs}, and follows from an
 analysis of the biclique cover number of an auxiliary graph that will be defined later.  

\section{Upper Bounds On Induced Saturation Numbers}
\label{sec:upper}

In order to prove an upper bound on $\isat(n,\PP)$ for some poset $\PP$ and some integer $n\geq 1$, we find a family $\F$ in $\B_n$ that is induced-$\PP$-saturated.  For instance, consider the family $\F$ composed of $k$ maximum chains in $\mathcal{B}_n$ ($n\geq k$) whose pairwise intersection is precisely  $\{\emptyset,[n]\}$. This family $\F$ has cardinality $k(n-1)+2$ and is easily seen to be induced-$\PP$-saturated for $\PP\in\{\mathcal{A}_{k+1}, \mathcal{V}_{k+1}, \Lambda_{k+1}, \D_{k+1}\}$ (see Figure~\ref{fig:fullchains}) implying Proposition \ref{fullchains}.

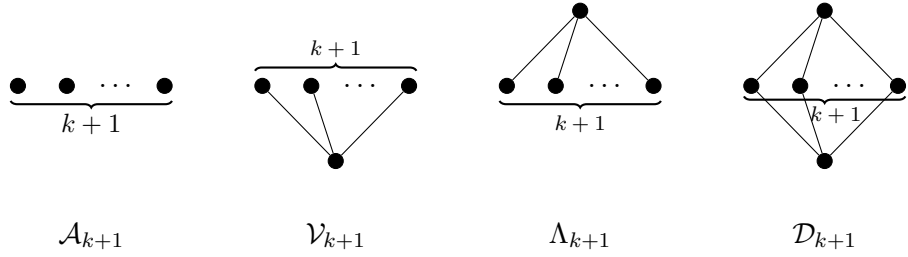
\begin{figure}[h]
	\begin{center}
	\begin{tikzpicture}[x=.65cm]
		\tikzset{vertex/.style = {shape=circle,draw,fill,minimum size=.2cm, inner sep = 0 }}
		\node[vertex] (ba) at (0,0) {};
		\node[vertex] (bb) at (1,0) {};
		\node (bc) at (2,0) {$\ldots$};
		\node[vertex] (bd) at (3,0) {};
		\node (k) at (1.5, -.5) {\small{$k+1$}};
		\node (A) at (1.5, -2) {$\mathcal{A}_{k+1}$};
		\draw[thick, decorate, decoration = {brace, mirror, raise = .2cm}] (-.15,0) -- (3.15,0);

		\node[vertex] (ba) at (5,0) {};
		\node[vertex] (bb) at (6,0) {};
		\node (bc) at (7,0) {$\ldots$};
		\node[vertex] (bd) at (8,0) {};
		\node[vertex] (bottom) at (6.5, -1) {};
		\node (k) at (6.5, .5) {\scriptsize{$k+1$}};
		\node (V) at (6.5, -2) {$\mathcal{V}_{k+1}$};
		\draw (ba) -- (bottom);
		\draw (bd) -- (bottom);
		\draw (bb) -- (bottom);
		\draw[thick, decorate, decoration = {brace, raise = .2cm}] (4.85,0) -- (8.15,0);

		\node[vertex] (a) at (10,0) {};
		\node[vertex] (b) at (11,0) {};
		\node (c) at (12,0) {$\ldots$};
		\node[vertex] (d) at (13,0) {};
		\node[vertex] (top) at (11.5, 1) {};
		\node (k) at (11.5, -.5) {\scriptsize{$k+1$}};
		\node (L) at (11.5, -2) {$\Lambda_{k+1}$};
		\draw (top) -- (a);
		\draw (top) -- (b);
		\draw (top) -- (d);
		\draw[thick, decorate, decoration = {brace, mirror, raise = .2cm}] (9.85,0) -- (13.15,0);

		\node[vertex] (ba) at (15,0) {};
		\node[vertex] (bb) at (16,0) {};
		\node (bc) at (17,0) {$\ldots$};
		\node[vertex] (bd) at (18,0) {};
		\node[vertex] (bottom) at (16.5, -1) {};
		\node (k) at (16.727, -.38) {\scriptsize{$k+1$}};
		\node (D) at (16.5, -2) {$\D_{k+1}$};
		\draw (ba) -- (bottom);
		\draw (bd) -- (bottom);
		\draw (bb) -- (bottom);
		\node[vertex] (top) at (16.5,1) {};
		\draw (ba) -- (top);
		\draw (bd) -- (top);
		\draw (bb) -- (top);
		\draw[thick, decorate, decoration = {brace,  mirror,raise = .14cm}] (14.85,0) -- (18.15,0);
	\end{tikzpicture}
	\end{center}
	\caption{The Hasse diagrams of posets whose induced saturation numbers are upper bounded in  Proposition~\ref{fullchains}.}
	\label{fig:fullchains}
\end{figure}

\begin{prop}\label{fullchains}
	If $n > k$ and $\PP\in\{\mathcal{A}_{k+1},\mathcal{V}_{k+1},\Lambda_{k+1},\D_{k+1}\}$, then 
	\begin{align*}
		\isat (n, \PP)\leq k(n-1)+2.
	\end{align*}
\end{prop}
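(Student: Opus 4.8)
The plan is to exhibit an explicit induced-$\PP$-saturated family $\F$ of size exactly $k(n-1)+2$; since $\isat(n,\PP)$ is a minimum over all saturated families, this one construction yields the upper bound for all four target posets at once. First I would build the $k$ chains concretely. Identify $[n]$ with $\mathbb{Z}_n$ and, for each $i\in\{0,1,\ldots,k-1\}$, let $C_i$ be the maximal chain whose size-$j$ member is the cyclic interval $\{i,i+1,\ldots,i+j-1\}$ (indices mod $n$) for $0\le j\le n$, so that $C_i$ runs from $\emptyset$ to $[n]$ through one set of each size. The one elementary fact needed is that a nonempty proper cyclic interval has a unique left endpoint; hence for $1\le j\le n-1$ distinct starting points give distinct size-$j$ sets, and since $n>k$ the indices $0,\ldots,k-1$ are distinct in $\mathbb{Z}_n$, so $C_i\cap C_{i'}=\{\emptyset,[n]\}$ for $i\neq i'$. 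Setting $\F=\bigcup_i C_i$ and counting the two shared endpoints once gives $|\F|=k(n-1)+2$. I would remark that only this pairwise-intersection property, not the specific construction, is used in what follows.

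Next I would verify induced-$\PP$-freeness, which reduces to a single antichain bound. Since $\F$ is a union of $k$ chains, any antichain meets each $C_i$ in at most one element, so $\F$ contains no antichain of size $k+1$. Each of $\mathcal{A}_{k+1},\mathcal{V}_{k+1},\Lambda_{k+1},\D_{k+1}$ contains $k+1$ pairwise incomparable elements (all of $\mathcal{A}_{k+1}$; the maximal elements of $\mathcal{V}_{k+1}$; the minimal elements of $\Lambda_{k+1}$; the middle level of $\D_{k+1}$), and an induced embedding preserves incomparability, so an induced copy of any of these in $\F$ would force a $(k+1)$-antichain, a contradiction. Thus $\F$ is induced-$\PP$-free for all four posets.

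For saturation it suffices to show that adding any single $X\in\B_n\setminus\F$ creates an induced copy of $\PP$, since any larger proper superset then contains such a copy as well. The crux, which handles all four posets uniformly, is an equal-cardinality observation. Because $\emptyset,[n]\in\F$, the added set has size $m:=|X|\in\{1,\ldots,n-1\}$; let $z_i$ be the unique size-$m$ member of $C_i$. Then $X,z_0,\ldots,z_{k-1}$ are $k+1$ distinct sets of the same cardinality $m$ (distinct because $X\notin\F$ while the $z_i$ are pairwise distinct), hence pairwise incomparable, i.e.\ an antichain of size $k+1$. This already induces $\mathcal{A}_{k+1}$; adjoining $\emptyset$ below induces $\mathcal{V}_{k+1}$, adjoining $[n]$ above induces $\Lambda_{k+1}$, and adjoining both induces $\D_{k+1}$. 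In each case the chosen elements lie in $\F\cup\{X\}$ and induce exactly the target poset.

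The step requiring the most care is saturation, and the idea that makes it both uniform and easy is precisely this equal-cardinality antichain: once one notices that the size-$m$ members of the $k$ chains are automatically incomparable to each other and to $X$, the three non-antichain posets follow by attaching the global bottom $\emptyset$ and/or top $[n]$, both already present in $\F$. The only other point needing attention is the distinctness of the chains' interior sets, which the cyclic-interval construction settles via the unique-left-endpoint fact; I would state that lemma first so that both the size count and the saturation argument can invoke it cleanly.
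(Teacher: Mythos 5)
Your proof is correct and takes essentially the same approach as the paper: the paper's proof consists precisely of taking $k$ maximum chains in $\B_n$ whose pairwise intersection is $\{\emptyset,[n]\}$ and asserting that their union of size $k(n-1)+2$ is ``easily seen'' to be induced-$\PP$-saturated for all four targets. Your cyclic-interval realization of the chains, the width-$k$ argument for induced-$\PP$-freeness, and the equal-cardinality antichain $X,z_0,\ldots,z_{k-1}$ (augmented by $\emptyset$ and/or $[n]$) for saturation simply supply the details the paper omits.
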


Proposition \ref{fullchains} provides the upper bounds  in Theorems  \ref{thm:veewedge} and \ref{thm:diamond}.   
Note that when $k=1$, the family $\F'=\{\{i\}: i\in [n]\} \cup \{\emptyset\}$ is also induced-$\D_2$-saturated. 
Hence, extremal examples need not be unique. 

While Proposition~\ref{fullchains} provides an upper bound for $\isat(n,\mathcal{A}_{k+1})$, this is not best possible. For $n>k$, suppose $k=\binom{\ell}{(\ell-1)/2}$ for some odd integer $\ell$. (Note: With care, this construction can be tweaked to remove this restriction on $k$.)

Consider the following families of elements of $\mathcal{B}_n$: 
\begin{align*}
\mathcal{J}&:=\left\{S: S \subseteq \{1,2,\ldots, \ell\}, |S| \leq \frac{\ell-1}{2}\right\},\\
\mathcal{K}&:= \left\{S \cup \{\ell+1, \ell+2, \ldots, \ell+t\}: t\in \{1,2, \ldots, n-\ell\}, S \subseteq \{1,2,\ldots, \ell\} , |S|=\frac{\ell-1}{2}\right\},\\
\mathcal{L}&:= \left\{S \cup \{\ell+1, \ell+2, \ldots, n\}: S\subseteq \{1,2,\ldots, \ell\}, |S| \geq \frac{\ell+1}{2}\right\}.
\end{align*}

One can view this as the lower half of the Boolean lattice $\mathcal{B}_\ell$ (described by $\mathcal{J}$) and a copy of the top half of $\mathcal{B}_\ell$ (described by $\mathcal{L}$)  connected by $k$ disjoint chains (described by $\mathcal{K}$). 

We claim that $\mathcal{F}=\mathcal{J} \cup \mathcal{K} \cup \mathcal{L}$ is an induced-$\mathcal{A}_{k+1}$-saturated family. To see this, first observe that $\mathcal{F}$ has width $k$ as it can be decomposed into $k$ chains. Thus $\mathcal{F}$ does not contain a copy of $\mathcal{A}_{k+1}$. Now consider any set $T \subseteq [n]$, $T\not \in \mathcal{F}$. We will show that the family $\F \cup \{T\}$ contains an induced copy of $\mathcal{A}_{k+1}$.

If $|T|\in \{\frac{\ell+1}{2}, \ldots, n-\frac{\ell+1}{2}\}$, then $T$ forms an antichain of size $k+1$ with the $k$ elements in $\mathcal{K}$ that have the same size as $T$. 
If $|T|\leq \frac{\ell-1}{2}$, then there exists $j\in T$, $j \not \in \{1,2, \ldots, \ell\}$. So $T$ forms an antichain of size $k+1$ with $\{S: S \subseteq \{1,2,\ldots, \ell\}, |S|=\frac{\ell-1}{2}\}$ from $\mathcal{J}$. 
On the other hand, if $|T|\geq n-\frac{\ell-1}{2}$, then $T$ forms an antichain of size $k+1$ with the sets in $\mathcal{L}$ of size $n-\frac{\ell-1}{2}$.  

The size of this induced-$\mathcal{A}_{k+1}$-saturated family is $2^\ell + (n-\ell)k$. 
From the fact that $k=\binom{\ell}{(\ell-1)/2}$, the value of $\ell$ is asymptotically $\log_2 k + \frac{1}{2}\log_2 \log_2 k +O(1)$ which implies $\isat(n, \mathcal{A}_{k+1}) \leq (n-1)k-k\left( \frac{1}{2} \log_2 k + \frac{1}{2} \log_2 \log_2 k + O(1)\right)$. This is an improvement of the upper bound of $(n-1)k+2$ obtained from the $k$ disjoint chains construction. However they are asymptotically the same in $n$. 

Next we turn our attention to the $\mathcal{N}$ poset. For consistency, let $\mathcal{N}(A,B,C,D)$ denote the poset $\mathcal{N}$ in Figure~\ref{Hasse_Diagrams}, with $A<B$, $C<B$, and $C<D$. 
Proposition~\ref{UB:TheN} provides the upper bound  in Theorem~\ref{thm:theN}. 

\begin{prop}\label{UB:TheN}
	For $n\geq 3$, $\isat(n,\mathcal{N}) \leq 2n$. 
\end{prop}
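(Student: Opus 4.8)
The plan is to exhibit a single explicit family and argue it is induced-$\mathcal{N}$-saturated. I would take
\[
\F=\bigl\{\{1,\dots,i\}: 0\le i\le n\bigr\}\ \cup\ \bigl\{\{i\}: 2\le i\le n\bigr\},
\]
that is, the maximal chain of prefixes $\emptyset\subsetneq\{1\}\subsetneq\{1,2\}\subsetneq\cdots\subsetneq\{1,\dots,n-1\}\subsetneq[n]$ together with the singletons $\{2\},\dots,\{n\}$. The chain contributes $n+1$ sets and the only singleton lying on it is $\{1\}$, so $|\F|=2n$. To see $\F$ is induced-$\mathcal{N}$-free, suppose $\mathcal{N}(A,B,C,D)$ (with $A<B$, $C<B$, $C<D$, $B\parallel D$, as labeled in Figure~\ref{Hasse_Diagrams}) occurred. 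The element $C$ lies below the incomparable pair $B,D$, so $C\neq\emptyset$; but the up-set in $\F$ of any singleton $\{i\}$ is the chain $\{1,\dots,i\},\dots,[n]$, and the up-set of any nonempty proper prefix is likewise a chain of prefixes. Either way $B$ and $D$ would be comparable, a contradiction.

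The real content is saturation. Fix $T\subseteq[n]$ with $T\notin\F$; then $|T|\ge 2$, $T\ne[n]$, and $T$ is not a prefix. In every case I would place $T$ in the role of $B$ and choose $A,C,D\in\F$ as follows. If $1\in T$, let $r$ be the largest index with $\{1,\dots,r\}\subseteq T$ (so $r+1\notin T$, and since $T$ is not a prefix, $\max T\ge r+2$); set $A=\{\max T\}$, $C=\{1,\dots,r\}$, $D=\{1,\dots,r+1\}$. If $1\notin T$, set $A=\{\max T\}$, $C=\{\min T\}$, $D=\{1,\dots,\min T\}$. In both cases one checks the six comparisons $A<T$, $C<T$, $C<D$ together with $A\parallel C$, $A\parallel D$, $T\parallel D$; the only nonroutine one is $T\parallel D$, which holds because $D$ contains a point outside $T$ (namely $r+1$, resp.\ $1$) while $\max T\in T\setminus D$. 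This produces an induced $\mathcal{N}$ in $\F\cup\{T\}$ and completes the bound.

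The hard part is not the verification but \emph{finding a $2n$-element family that is at once $\mathcal{N}$-free and saturated}. The most natural candidate, two maximal chains meeting only in $\emptyset$ and $[n]$ (say prefixes and suffixes), is $\mathcal{N}$-free but is \emph{not} saturated: a set like $\{1,3\}\subseteq[4]$ is comparable in that family only to one of the two chains, so both its up-set and down-set remain chains and no induced $\mathcal{N}$ appears upon adding it. Breaking the symmetry by hanging the low singletons $\{2\},\dots,\{n\}$ off a single chain is exactly what provides, for each added $T$, the incomparable ``arm'' $A=\{\max T\}$ that the two-chain family lacks. The delicate step in the proof is therefore the choice of $D$ as the short prefix that first departs from $T$, ensuring $T\parallel D$ while keeping $\max T\notin D$; both requirements come down to the inequalities $\max T\ge r+2$ (when $1\in T$) and $\max T>\min T$ (when $1\notin T$), which hold precisely because $T\notin\F$ forces $|T|\ge 2$ and $T$ not to be a prefix.
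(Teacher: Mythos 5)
Your family is exactly the paper's family (the paper writes it as all singletons, all prefixes, and $\emptyset$; the duplicate $\{1\}$ accounts for the same count of $2n$), and your saturation argument is a correct variant of the paper's: the paper embeds the new set as the element $D$ of $\mathcal{N}(A,B,C,D)$, using witnesses $\{t\},\{1,\ldots,t\},\{1\}$ (or $\{1\},\{1,\ldots,i_1\},\{i_1\}$ when $t=1$), whereas you embed it as $B$ with a singleton and two short prefixes; both verifications go through.

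The one genuine flaw is in your $\mathcal{N}$-freeness argument. The inference ``$C$ lies below the incomparable pair $B,D$, so $C\neq\emptyset$'' is a non sequitur: $\emptyset$ lies below every pair of sets, incomparable or not (e.g.\ $\emptyset<\{2\}$, $\emptyset<\{3\}$, and $\{2\}\parallel\{3\}$ all hold inside $\F$, so being below an incomparable pair does not force nonemptiness). The conclusion $C\neq\emptyset$ is precisely what your up-set argument needs, and it is true, but for a different reason: an induced copy of $\mathcal{N}$ also requires $A\parallel C$, and $\emptyset$ is comparable to every set, so neither $A$ nor $C$ can be $\emptyset$. With that one-line repair your argument is complete, since the up-set in $\F$ of any nonempty member is indeed a chain of prefixes, contradicting $B\parallel D$. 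For comparison, the paper uses the same fact but pushes it one step further: $A,C\neq\emptyset$ forces $|B|,|D|\geq 2$, and every set of size at least $2$ in $\F$ is a prefix, so $B$ and $D$ are comparable, again contradicting $B\parallel D$.
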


\begin{proof}
	Consider the family 
	\begin{align*}
		\F=\{\{i\}: i\in [n]\} \cup \{\{1,2,\ldots,i\}: i\in [n]\} \cup \{\emptyset\}.
	\end{align*}

	Let ${\mathcal N}(A,B,C,D)$ be an induced copy of $\mathcal{N}$ in $\mathcal{B}_n$. We will show that $A, B, C,$ and $D$ cannot all be in $\F$. Because $A$ and $C$ are incomparable, both must have size at least 1. 
	Thus $B$ and $D$, which are incomparable, each have size at least 2. 
	But all sets in $\F$ with size at least 2 lie on a chain and thus are comparable. 
	Therefore $\F$ has no induced copy of $\mathcal{N}$. 

	Further, consider any set $S=\{i_1, i_2, \ldots, i_k\}$, with $i_1 < i_2 < \ldots < i_k$, that is not in $\F$. 
	Note that $k\geq 2$. 
	Choose  the smallest $t\in  [n]\setminus S$. 
	Since $S \not\in \F$, $t< i_k$. 
	If $t \neq 1$, then $\mathcal{N}(\{t\}, \{1,2,\ldots, t\}, \{1\},S)$ is an induced copy of $\mathcal{N}$ in $\F \cup \{S\}$.  
	If $t=1$, then $\mathcal{N}(\{1\}, \{1,2,\ldots,i_1\}, \{i_1\},S)$ is an induced copy of $\mathcal{N}$. 
\end{proof}

We now establish the upper bound for the induced saturation number of the butterfly as stated in Theorem~\ref{thm:butterfly}. 
Let $\bowtie(A,B,C,D)$ denote the butterfly with elements $A$, $B$, $C$, and $D$ where $A$ and  $C$ are the minimal elements. 

\begin{prop}\label{UB:Butterfly}
	For $n\geq 3$, $\isat(n,\bowtie) \leq \binom{n}{2} + 2n - 1$.
\end{prop}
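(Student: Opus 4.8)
The plan is to exhibit an explicit family $\F \subseteq \B_n$ of size $\binom{n}{2}+2n-1$ and verify the two defining properties of induced-$\bowtie$-saturation: that $\F$ contains no induced butterfly, and that adjoining any $T \notin \F$ creates one. Guided by the successful constructions for $\mathcal{N}$ and the chain/antichain families, I would look for a family concentrated in the low levels of the Boolean lattice, since the butterfly requires two incomparable minimal elements below two incomparable maximal elements with full comparabilities across. A natural candidate whose size matches the target is
\begin{align*}
	\F = \{\emptyset\} \cup \{\{i\} : i \in [n]\} \cup \{\{i,j\} : 1 \le i < j \le n\} \cup \{[n]\setminus\{i\} : i \in [n]\},
\end{align*}
which has $1 + n + \binom{n}{2} + n = \binom{n}{2}+2n-1$ elements; I would confirm this count and adjust the precise co-atom/atom layers if the butterfly-freeness check forces a small modification.

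For the first property, I would argue that $\F$ contains no induced $\bowtie$. An induced butterfly needs two incomparable elements $A,C$ each below two incomparable elements $B,D$, with $A,C < B$ and $A,C < D$. The key structural observation to prove is that within $\F$ one cannot simultaneously find two incomparable ``bottom'' sets that both sit below two incomparable ``top'' sets: the only sets of size $\ge 2$ that are not co-atoms are the pairs $\{i,j\}$, and a pair lies below a set in $\F$ essentially only if that set is a co-atom containing it, so the comparabilities available are too sparse to complete the $2$-below-$2$ pattern. I would make this precise by casing on the sizes of $A,B,C,D$, using that any two distinct minimal elements of a butterfly are incomparable and that both upper elements must dominate both lower ones.

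For the second (saturation) property, I would take an arbitrary $T \subseteq [n]$ with $T \notin \F$ and exhibit three elements of $\F$ that, together with $T$, form an induced $\bowtie(A,B,C,D)$. Since $T$ is neither $\emptyset$, a singleton, a pair, nor a co-atom, we have $3 \le |T| \le n-1$, which gives enough room both above and below $T$ inside $\F$. The idea is to use singletons/pairs contained in $T$ as one minimal element and its partner, and co-atoms containing $T$ (or containing the chosen pair) as the two maximal elements, choosing indices inside and outside $T$ to force the required incomparabilities. Concretely, for suitable $i \in T$ and $j \notin T$ I would try to realize $T$ as one of the four positions and complete the pattern with a singleton, a pair, and a co-atom; several subcases (e.g.\ whether $|T|$ is close to $n$) will need separate witnesses.

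The main obstacle I expect is the butterfly-freeness verification: unlike the antichain case (where width gives a clean one-line argument) or the $\mathcal{N}$ case (where ``all large sets lie on a chain'' collapses the check), the butterfly pattern is symmetric under the up-down flip and requires ruling out \emph{every} choice of the $2{\times}2$ comparability structure among four sets of $\F$. The delicate point is the interaction between the $\binom{n}{2}$ pairs and the $n$ co-atoms, since a pair $\{i,j\}$ is below the co-atom $[n]\setminus\{k\}$ whenever $k \notin \{i,j\}$, creating many comparabilities; I will need to show that these never line up into the forbidden two-incomparable-below-two-incomparable configuration, most likely by arguing that any two co-atoms that both dominate two given pairs cannot themselves be incomparable in the way the butterfly demands, or by showing the shared lower pair's structure blocks a second incomparable bottom element. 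Getting these incomparability constraints to hold simultaneously, rather than the comparabilities, is the crux.
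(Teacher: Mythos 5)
Your proposal has a genuine gap: the candidate family you write down is not butterfly-free, and the repair needed is not the ``small modification'' you hedge for but the central idea of the construction. For $n\geq 4$, take $A=\{1\}$, $C=\{2\}$, $B=[n]\setminus\{3\}$, $D=[n]\setminus\{4\}$: the singletons are incomparable, the two co-atoms are incomparable, and both singletons lie below both co-atoms, so $\bowtie(A,B,C,D)$ is an induced butterfly inside your $\F$. (The same failure occurs with two pairs as the bottom elements once $n\geq 5$.) The problem is structural: the full antichain of co-atoms gives \emph{two} incomparable tops over essentially every small bottom pair, which is exactly the pattern you must forbid. The paper's construction replaces your co-atom layer with a \emph{chain}, $\F_c=\{\{1,2,\ldots,i\}: 3\leq i\leq n\}$, keeping $\{\emptyset\}$, all singletons, and all pairs. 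Then freeness is short: since $\F_c$ is a chain, the two incomparable maximal elements of any induced butterfly cannot both come from $\F_c$, so one of them is a pair, forcing both minimal elements to be singletons $A$ and $C$; but the only members of $\F$ above both $A$ and $C$ are $A\cup C$ and the chain elements containing it, which are pairwise comparable, so no butterfly exists. Saturation is then witnessed explicitly: for $S=\{i_1,\ldots,i_k\}\notin\F$ (so $k\geq 3$), with $t$ the least element of $[n]\setminus S$ and $M=\max\{i_2,t\}$, the four sets $\{i_1\},\{1,\ldots,M\},\{i_2\},S$ form an induced $\bowtie$.

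Two further concrete errors: your count is off, since $1+n+\binom{n}{2}+n=\binom{n}{2}+2n+1$ for $n\geq 4$ (the claimed $\binom{n}{2}+2n-1$ is what the paper's family achieves, as $|\F_c|=n-2$); and for $n=3$ your family is not even saturated, because adding $[3]$ creates no butterfly ($[3]$ is comparable to everything, so it can serve neither as a minimal element nor as one of the two incomparable maximal elements). Your saturation sketch (singleton, pair, co-atom above $T$) also cannot survive as stated once the co-atoms are removed; the chain elements $\{1,\ldots,M\}$ play that role instead, and choosing $M=\max\{i_2,t\}$ is what simultaneously forces $\{1,\ldots,M\}\parallel S$ and $\{i_1\},\{i_2\}\subset\{1,\ldots,M\}$ without casework on $|T|$.
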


\begin{proof}
	Consider the family $\F= \{\emptyset\} \cup \F_1 \cup \F_2 \cup \F_c$ where 
$\F_1 = \{\{i\}: i\in [n]\}$, $\F_2 = \{ \{i,j\}: i, j\in [n], i\neq j\}$, and $\F_c = \{\{1,2,\ldots, i\}:  3\leq i\leq n\}$. 
	Suppose $\bowtie(A,B,C,D)$ is an induced butterfly in $\F$. 
	 Since the maximal elements must be incomparable and $\F_c$ is a chain, either $B$ or $D$ must be in $\F_2$ and thus the two incomparable minimal elements, $A$ and $C$, are from  $\F_1$.
	However the only supersets of both $A$ and $C$ in $\F$ are $A\cup C$ and sets in $\F_c$ that contain $A\cup C$. But these are all comparable, so $\F$ contains no induced $\bowtie$.   

	 Let $S=\{i_1, i_2, \ldots, i_k\}$, where $i_1< i_2 < \cdots < i_k$, be an element of $\B_n\setminus \F$ (so $k\geq 3$). 
	Further, let $t\in [n]$ be the smallest value not in $S$ and note that $t<i_k$.  
	For $M=\max\{i_2, t\}$, the butterfly $\bowtie(\{i_1\}, \{1,2,\ldots, M\}, \{i_2\}, S)$ is induced.  
	Thus, $\F$ is induced-$\bowtie$-saturated, proving the proposition.
\end{proof}

\section{Lower Bounds on the Induced Saturation Number}\label{LBs}

In this section, we establish lower bounds on the induced saturation number $\isat(n,\PP)$ for specific choices of $\PP$. 
For the poset $\mathcal{A}_{k}$, the lower bound matches the upper bound asymptotically (up to a constant multiple). For  $\mathcal{V}_2$ and $\Lambda_2$, the lower bounds match the upper bounds exactly, and thus we know the induced saturation numbers. 
We conclude with a general logarithmic lower bound on  $\isat(n,\PP)$ for a rich class of posets.

\subsection{Asymptotic and  Exact Induced Saturation Numbers}

One class of posets for which our upper and lower bounds for $\isat(n, \PP)$ match asymptotically is antichains. 
Recall that $\mathcal{A}_k$ denotes the {\em antichain} with $k$ elements, a collection of $k$ pairwise incomparable elements. 
To address $\isat(n, \mathcal{A}_k)$, we will make use of the classical theorem of Dilworth.

\begin{thm}[Dilworth~\cite{Dilworth}] \label{thm:Dilworth}
	Let $\PP= (P, \leq)$ be a (finite) poset. 
	If $k$ is the maximum size of an antichain in $\PP$, then there exist disjoint  chains $\mathcal{C}_1, \mathcal{C}_2, \ldots, \mathcal{C}_k$ such that $P = \mathcal{C}_1 \cup \mathcal{C}_2 \cup \ldots \cup \mathcal{C}_k$. 
\end{thm}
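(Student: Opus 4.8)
The plan is to establish the nontrivial inequality by induction on $|P|$, since the other direction is immediate: if $A$ is an antichain of size $k$, then no chain contains two elements of $A$, so any cover of $P$ by disjoint chains uses at least $k$ of them. Thus it suffices to exhibit a partition of $P$ into exactly $k$ chains. (An alternative route encodes chain partitions as matchings in the bipartite comparability graph built on two copies of $P$ and applies König's theorem, but I would prefer the self-contained induction of Galvin, which avoids importing the matching machinery.)

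First I would pick a maximal element $a$ of $P$ and set $P' = P \setminus \{a\}$. Writing $k'$ for the maximum antichain size of $P'$, one has $k-1 \le k' \le k$ (an antichain of $P$ meets $P'$ in at most $k'$ elements and contains at most one copy of $a$). The inductive hypothesis gives a partition of $P'$ into $k'$ chains $C_1, \ldots, C_{k'}$.

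The crucial construction is this: in each $C_i$ let $x_i$ be the largest element lying in some maximum (size-$k'$) antichain of $P'$. The key lemma I would prove is that $\{x_1, \ldots, x_{k'}\}$ is itself an antichain. This rests on the observation that if $\{a_1, \ldots, a_{k'}\}$ and $\{b_1, \ldots, b_{k'}\}$ are maximum antichains of $P'$ with $a_i, b_i \in C_i$, then the coordinatewise maxima $\{\,\max(a_i,b_i) : 1 \le i \le k'\,\}$ again form a maximum antichain, a short case analysis using that each $C_i$ is a chain. Applying this to an antichain witnessing $x_i$ and one witnessing $x_j$ produces a maximum antichain containing both $x_i$ and $x_j$, forcing them to be incomparable. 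I expect this lemma, together with the correct setup of the $x_i$, to be the main obstacle; everything after it is bookkeeping.

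Finally I would reattach $a$. If $a$ is incomparable to every $x_i$, then $\{x_1, \ldots, x_{k'}, a\}$ is an antichain of size $k'+1$, so $k = k'+1$ and the chains $C_1, \ldots, C_{k'}, \{a\}$ are the desired $k$ chains. Otherwise, since $a$ is maximal it cannot lie below anything, so $a > x_i$ for some $i$. I would then replace $C_i$ by the chain consisting of $a$ together with the part of $C_i$ at or below $x_i$, and observe that deleting that lower part from $P'$ destroys every maximum antichain, because each such antichain meets $C_i$ in an element $\le x_i$ and hence inside the deleted part. The leftover poset therefore has maximum antichain size $k'-1 = k-1$, so induction partitions it into $k-1$ chains, which together with the modified chain cover $P$ with exactly $k$ chains.
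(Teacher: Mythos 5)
Your proof is correct, but there is nothing in the paper to compare it against: the paper imports Dilworth's theorem as a black box, citing \cite{Dilworth}, and never proves it (it is used only once, in the proof of Proposition~\ref{LB:antichain}). What you have written is Galvin's well-known inductive proof, and it is essentially complete; two small steps are left implicit and deserve a sentence each. First, your key lemma: if $\{a_1,\dots,a_{k'}\}$ and $\{b_1,\dots,b_{k'}\}$ are maximum antichains of $P'$ with $a_i,b_i\in\mathcal{C}_i$, and $m_i=\max(a_i,b_i)$, then $m_i<m_j$ with, say, $m_j=a_j$ forces $a_i\le m_i<m_j=a_j$, contradicting that the $a$'s form an antichain; the case $m_j=b_j$ is symmetric, so the ``short case analysis'' really is two lines, and $|\{m_1,\dots,m_{k'}\}|=k'$ is automatic since the set meets each chain exactly once. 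Second, in the branch where $a>x_i$ you assert $k'-1=k-1$, i.e.\ $k=k'$, without justification; it follows from the same observation you already make: any antichain of size $k'+1$ in $P$ would consist of $a$ together with a maximum antichain of $P'$, but every maximum antichain of $P'$ meets $\mathcal{C}_i$ in an element $\le x_i<a$ and is therefore comparable with $a$, so no such antichain exists. Relatedly, you should record that the leftover poset has maximum antichain size \emph{exactly} $k'-1$ (not merely at most), witnessed by the antichain $\{x_j : j\ne i\}$, so that the inductive call returns exactly $k-1$ chains. With those additions the argument is airtight, and it is a perfectly appropriate self-contained substitute for the citation.
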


We can now establish the lower bound in Theorem~\ref{thm:antichain}.

\begin{prop}\label{LB:antichain}
	If  $n>k\geq 3$, then
	\begin{align*}
		3n-1 \leq \isat(n,\mathcal{A}_{k+1}) .
	\end{align*}
\end{prop}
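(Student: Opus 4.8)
The plan is to show that any induced-$\mathcal{A}_{k+1}$-saturated family $\F$ in $\B_n$ must contain at least $3n-1$ elements, for $n > k \geq 3$. The starting observation is that since $\F$ is $\mathcal{A}_{k+1}$-free, by Dilworth's theorem (Theorem~\ref{thm:Dilworth}) the family $\F$ has width at most $k$ and therefore decomposes into at most $k$ chains. The saturation condition gives the real leverage: for \emph{every} set $T \in \B_n \setminus \F$, adding $T$ must create an induced copy of $\mathcal{A}_{k+1}$, which means $T$ must be incomparable to some $k$-element antichain already present in $\F$. I would translate this into a statement about which ``levels'' or which elements of $\B_n$ must be covered by antichains inside $\F$, and then count.

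First I would set up the counting framework by examining, for each size class, how $\F$ must interact with the elements of $\B_n$ of that size. The key structural claim to establish is a lower bound on the number of elements of $\F$ lying at or near each rank, driven by the requirement that every missing $T$ be blocked by an antichain of size $k$ in $\F$ consisting of elements incomparable to $T$. In particular, for $T$ of a fixed small size (say the singletons, or size just below $(\ell-1)/2$ in the upper-bound construction's language), the blocking antichain is forced to sit in a restricted region, and I would argue that these regions require many distinct elements of $\F$. I expect to extract three roughly disjoint linear-in-$n$ contributions — plausibly one near the bottom of the lattice, one in the middle levels, and one near the top, mirroring the $\mathcal{J}, \mathcal{K}, \mathcal{L}$ decomposition of the matching upper-bound construction — whose sizes sum to at least $3n-1$.

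The cleanest way to organize the middle contribution is to consider the elements $T$ of $\B_n$ that lie on a long chain and ask how $\F$ must meet every level: if some rank $r$ in an appropriate middle range contained too few elements of $\F$, I would produce a set $T$ of size $r$ that is comparable to everything in $\F$ at nearby ranks, contradicting saturation (no blocking antichain could exist). Quantifying ``too few'' carefully — against the width bound of $k$ and the incomparability requirement — is where the bookkeeping lives. I would phrase each piece as: either $\F$ is large in this region, or there is a witness $T$ whose addition fails to create an induced $\mathcal{A}_{k+1}$.

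The main obstacle I anticipate is handling the boundary/overlap between the three regions without double-counting, and making the middle-level argument fully rigorous: an element of $\F$ could in principle serve as a blocker for sets $T$ of several different sizes simultaneously, so a naive rank-by-rank count risks undercounting the efficiency of $\F$ and thus failing to reach $3n-1$. To get the constant $3$ rather than something weaker, I would need to argue that the bottom, middle, and top obligations genuinely demand \emph{distinct} elements of $\F$ (or at least that the unavoidable overlaps cost only the additive $-1$), which likely requires a careful case analysis of small sets $T$ (singletons and their complements) and an appeal to the fact that $n > k$ forces room for witnesses outside the first $k$ coordinates, exactly as in the saturation arguments used in Propositions~\ref{UB:TheN} and~\ref{UB:Butterfly}.
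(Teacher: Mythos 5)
There is a genuine gap here, on two levels. First, your central counting premise does not hold up: the constant $3$ in $3n-1$ does not come from three disjoint linear-in-$n$ regions mirroring $\mathcal{J},\mathcal{K},\mathcal{L}$. In that upper-bound construction only $\mathcal{K}$ has size linear in $n$; the families $\mathcal{J}$ and $\mathcal{L}$ have size depending on $k$ alone, so there are no ``bottom'' and ``top'' linear contributions to extract, and a proof organized around them cannot reach $3n-1$. The paper instead argues \emph{per rank}: it shows every cardinality $j\in\{1,\dots,n-1\}$ must contain at least two sets of $\F$, which together with $\emptyset,[n]\in\F$ gives $2n$; then, assuming $|\F|\leq 3n-2$, pigeonhole forces some rank to contain \emph{exactly} two sets, and a careful exchange argument at that rank yields a contradiction. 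Second, your proposed middle-level witness argument is invalid as stated: to show that adding a set $T$ creates no induced $\mathcal{A}_{k+1}$, it is not enough that $T$ be comparable to everything in $\F$ ``at nearby ranks,'' because elements of $\F$ at ranks far from $|T|$ can still be incomparable to $T$ (e.g., a singleton not contained in $T$), and $k$ such elements could form the blocking antichain. You need a certificate that rules out $(k+1)$-antichains involving $T$ globally.

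That certificate is the key idea you are missing, and it is where the paper makes essential use of Dilworth (Theorem~\ref{thm:Dilworth}): decompose $\F$ into $k$ disjoint chains, extend each to a \emph{maximal} chain of $\B_n$, and note that saturation forces $\F$ to equal the union of these $k$ maximal chains (any point of a maximal chain not in $\F$ could be added while keeping a $k$-chain cover, hence keeping width at most $k$). With this structure in hand, the contradictions are produced by local swaps: if $A\setminus\{x\}$, $A$, $A\cup\{y\}$ are consecutive on a covering chain, replace $A$ by $A'=(A\cup\{y\})\setminus\{x\}$ to get a $k$-chain cover of $\F\cup\{A'\}$, certifying that $A'\notin\F$ can be added without creating any $(k+1)$-antichain --- contradicting saturation. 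This ``a $k$-chain cover certifies $\mathcal{A}_{k+1}$-freeness'' device is exactly what replaces your rank-local comparability check, and without it (or something equivalent) none of your three regional claims can be made rigorous.
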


\begin{proof}
For contradiction, suppose that $\F$ is an induced-$\mathcal{A}_{k+1}$-saturated family with at most $3n-2$ elements.
Since $k$ is the maximum size of an antichain in $\F$, there exist disjoint  chains $ \hat{\mathcal{C}}_1, \hat{\mathcal{C}}_2, \ldots, \hat{\mathcal{C}}_k$ (not necessarily maximal chains) such that $\F = \hat{\mathcal{C}}_1 \cup \hat{\mathcal{C}}_2 \cup \ldots \cup \hat{\mathcal{C}}_k$ by Dilworth's Theorem (Theorem~\ref{thm:Dilworth}). Let $\mathcal{C}_i$ be an arbitrary extension of $\hat{\mathcal{C}}_i$ to a maximal chain in $\B_n$ for $1 \leq i \leq k$. The maximal chains ${\mathcal{C}}_1, {\mathcal{C}}_2, \ldots, {\mathcal{C}}_k$ form a chain cover of $\F$ (not necessarily disjoint) and remain $\mathcal{A}_{k+1}$-free by Dilworth's Theorem.  Since $\mathcal{F}$ is maximal with respect to containing no induced copy of $\mathcal{A}_{k+1}$, the union of these full chains is $\F$. 

Since $\emptyset$ and $[n]$ are comparable to every element of $\mathcal{B}_n$ and thus cannot participate in any $ (k+1)$-antichain, both are in $\F$.  Hence, $|\F\setminus \{\emptyset, [n]\}| \leq 3n-4$. 

Suppose $\F$ contains only one set, $A$, of cardinality $j$ for some $j\in \{1,2,\ldots, n-1\}$. Then all chains, and in particular $\mathcal{C}_1$ and $\mathcal{C}_2$ contain $A$. Further, $\mathcal{C}_1$ contains $A\setminus \{x\}$, $A$, and $A\cup \{y\}$ for some $x\in A$ and $y\not\in A$. Consider the set $A'=A\cup \{y\} \setminus \{x\}$ of size $j$. In particular, $A' \not\in \F$ and yet if we replace $\mathcal{C}_1$ with $\mathcal{C}'_1:=\mathcal{C}_1 \cup \{A'\} \setminus \{A\}$, we have a cover of $\F \cup \{A'\}$ with $k$ chains. Thus $\F \cup \{A'\}$ does not contain an induced copy of $A_{k+1}$, contradicting the choice of $\F$. So we may assume that $\F$ contains at least 2 sets of each cardinality $j\in \{1,2,\ldots, n-1\}$.

Since $|\F\setminus \{\emptyset, [n]\}| \leq 3n-4$, there must be some $j\in \{1,2,\ldots, n-1\}$ for which $\F$ contains exactly two sets of cardinality $j$, say $A$ and $B$.
Now consider the sets in $\F$ of size $j-1$. If $j>1$, there must be at least two sets $S$ and $T$ in $\F$ that have cardinality $j-1$. (Note: If $j=1$, then consider sets of cardinality $j+1$ rather than $j-1$. A similar argument will hold.) 
 It is not possible to have $S \subseteq A \cap B$ and $T \subseteq A \cap B$ since they are incomparable and both have size $j-1$. Without loss of generality, $S\not\subseteq B$. 

Since $S\in \F$ and $S \not\subseteq B$, there must be a chain, say $\mathcal{C}_1$, that contains $S$ and that chain must also contain $A$.  In particular, for some $x\in A$ and $y\not\in A$, $S=A \setminus \{x\}$, $A$, and $A \cup \{y\}$ are on $\mathcal{C}_1$. Let $A' = A \cup \{y\} \setminus \{x\}$. If there are at least two chains that contain $A$, then replacing $\mathcal{C}_1$ with $\mathcal{C}'_1 = \mathcal{C}_1 \cup \{A'\} \setminus \{A\}$ yields a chain cover of $\F \cup \{A'\}$ with $k$ chains, a contradiction to the choice of $\F$ as before.  
If instead the only chain that contains $A$ is $\mathcal{C}_1$, then at least two chains contain $B$ since $k\geq 3$. In particular, these chains pass through both $T$ and $B$ as $S \not\subseteq B$. Let $\mathcal{C}_2$  be one of these chains. In particular, $\mathcal{C}_2$ contains $T=B \setminus \{z\}$, $B$, and $B\cup \{w\}$. Let $B'=B \cup \{w\} \setminus \{y\}$. If $B' \neq A$, then replace $\mathcal{C}_2$ with $\mathcal{C}'_2 = \mathcal{C}_2 \cup \{B'\} \setminus \{B\}$ to obtain a cover of $\F \cup \{B'\}$ with $k$ chains, a contradiction as before. If $B'=A$, then replace $\mathcal{C}_1$ with $\mathcal{C}'_1$ and replace $\mathcal{C}_2$ with $\mathcal{C}'_2$ to obtain a chain cover of $\F \cup \{A'\}$ with $k$ chains, a contradiction. 
\end{proof}

Proposition~\ref{LB:antichain}  and Proposition~\ref{fullchains}  yield the bounds on $\isat(n,\mathcal{A}_{k+1})$ in Theorem~\ref{thm:antichain}.  It is worth mentioning that $\isat(n,\mathcal{A}_2)=n+1$ and $\isat(n,\mathcal{A}_3)=2n$. The first is clear. For the second, an induced-$\mathcal{A}_3$-saturated family $\F$ can be decomposed into two chains. The proof for Proposition~\ref{LB:antichain} showing that $\F$ contains at least two sets of cardinality $j$ for each $2\leq j \leq n-1$ also holds in this case, giving the desired result.

Now we turn our attention to the diamond, $\D_2$, and begin with the following lemma.  In addition to providing a starting point for proving the lower bound on $\isat(n,\D_2)$, this result also allows us to determine several other induced saturation numbers exactly.  

	For ease of notation, we will use $\D_2(A,B,C,D)$ to denote the diamond on four distinct elements, $A$, $B$, $C$, and $D$  where  $A$ is the unique minimal element, $D$ is the unique maximal element, and $B$ and $C$ are incomparable. 
	Note that $\D_2(A,B,C,D)$ and $\D_2(A,C,B,D)$ describe the same diamond. 	
	
	\begin{lem}\label{diamondempty}
		Let $\F$ be an induced-$\D_2$-saturated family in the Boolean lattice $\B_n$, $n \geq 2$. If $\emptyset\in\F$  or $[n]\in\F$, then $|\F| \geq n+1$. 
	\end{lem}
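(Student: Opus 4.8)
The plan is to suppose $\emptyset \in \F$ (the case $[n] \in \F$ follows by the order-reversing symmetry $S \mapsto [n]\setminus S$ of $\B_n$, which carries induced diamonds to induced diamonds and preserves induced-$\D_2$-saturation) and to count how many sets of each cardinality $\F$ must contain. Since $\F$ is induced-$\D_2$-saturated, for every $T \in \B_n \setminus \F$ the family $\F \cup \{T\}$ contains an induced diamond, and this induced diamond necessarily uses $T$ (as $\F$ itself is diamond-free). My strategy is to exhibit, for each ``missing level,'' a specific singleton or small set whose insertion cannot create a diamond unless $\F$ already contains enough sets of nearby cardinality.

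First I would examine the singletons. I claim each singleton $\{i\}$, $i \in [n]$, must lie in $\F$. Suppose $\{i\} \notin \F$. Because $\emptyset \in \F$, when we add $\{i\}$ the only way to form an induced $\D_2(A,B,C,D)$ using $\{i\}$ is with $\{i\}$ as one of the four elements; I would check each role. The element $\{i\}$ has only $\emptyset$ below it in $\B_n$, so $\{i\}$ cannot be the top $D$ (which needs two incomparable elements beneath it) nor one of the two incomparable middle elements $B,C$ (each of which needs a common element strictly below, forcing that element to be $\emptyset$, but then the other middle element would also have to sit above $\emptyset$ and below $D$ while being incomparable to $\{i\}$ and containing neither $\emptyset$-issues nor $i$). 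Working through the cases, forming a diamond through $\{i\}$ requires two sets in $\F$ that are incomparable yet share a specific covering relation with $\{i\}$, which a careful case check rules out when such witnesses are absent. Hence $\{i\} \in \F$ for all $i$, contributing $n$ sets, and together with $\emptyset$ this already gives $|\F| \ge n+1$.

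The cleanest route, and the one I would actually write, is therefore to prove directly that $\{\emptyset\} \cup \{\{i\} : i \in [n]\} \subseteq \F$ whenever $\emptyset \in \F$. The key mechanism is that adding a singleton $\{i\}$ to a family containing $\emptyset$ can only produce an induced diamond if that diamond's minimal element is $\emptyset$ and $\{i\}$ serves as a middle element, which in turn demands a second middle element $C$ with $\emptyset < C$, $C \parallel \{i\}$, and a common upper cover $D$ with $\{i\} < D$ and $C < D$ --- but the induced-diamond condition also forbids any element of $\F$ strictly between $\emptyset$ and $D$ other than $\{i\}$ and $C$, and one checks this configuration forces contradictory containment among existing members of $\F$. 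I would isolate this as the core case analysis.

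The main obstacle I anticipate is precisely this case analysis verifying that no induced diamond through $\{i\}$ can appear: the subtlety is that ``induced'' means I must guarantee \emph{no extra comparabilities}, so I cannot simply produce four sets forming a diamond as a weak subposet --- I must confirm the four chosen sets have exactly the diamond's comparabilities and no others. Managing the induced constraint (ruling out a fifth element of $\F$ that would collapse the diamond, and ensuring the two middle elements are genuinely incomparable) is where the delicacy lies. Once the singleton claim is established, the bound $|\F| \ge n + 1$ is immediate from $|\{\emptyset\} \cup \{\{i\}: i \in [n]\}| = n+1$, and the $[n] \in \F$ case is handled verbatim by complementation.
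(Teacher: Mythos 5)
Your proposal has a fatal gap: the central claim---that every induced-$\D_2$-saturated family containing $\emptyset$ must contain all singletons $\{i\}$---is false. A counterexample is the maximum chain $\F = \{\emptyset, \{1\}, \{1,2\}, \ldots, [n]\}$. This family is a chain, hence contains no induced diamond; and it is saturated: for any $T \notin \F$, let $A$ be the largest chain element contained in $T$ and $D$ the smallest chain element containing $T$. Since $T$ is not on the chain, $|D| \geq |A|+2$, so some chain element $C$ lies strictly between $A$ and $D$, and $C \parallel T$ (otherwise $C$ would contradict the maximality of $A$ or the minimality of $D$), giving the induced diamond $\D_2(A, T, C, D)$. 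So this $\F$ is induced-$\D_2$-saturated, contains $\emptyset$, yet omits every singleton except $\{1\}$. The precise point where your case analysis breaks is the assertion that an added singleton $\{i\}$ cannot serve as a middle element of an induced diamond: it can. A middle element needs only $\emptyset$ below it, a second middle element incomparable to it, and a common top above both; adding $\{2\}$ to the chain above creates exactly such a diamond, $\D_2(\emptyset, \{1\}, \{2\}, \{1,2\})$. Saturation therefore does not force missing singletons into $\F$; it only forces that inserting them creates a diamond, which families like the chain accomplish easily.

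The paper's proof is built precisely to survive this situation. It defines a map $\phi: [n] \to \F \setminus \{\emptyset\}$ by $\phi(x) = \{x\}$ when $\{x\} \in \F$, and otherwise uses saturation to obtain a diamond $\D_2(\emptyset, \{x\}, B, T)$ in $\F \cup \{\{x\}\}$ (the added singleton must be a middle element, by the same observation as above), choosing the witness pair $(B,T)$ extremally: $|B|$ maximum, then $|T|$ minimum. It then proves the key claim that this forces $T = B \cup \{x\}$, and sets $\phi(x) = T$. Injectivity of $\phi$ follows because $A \cup \{x\} = A' \cup \{y\}$ with $x \neq y$ would force $A \parallel A'$ and hence an induced diamond $\D_2(\emptyset, A, A', A \cup \{x\})$ inside $\F$ itself. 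If you want to salvage your write-up, you would need to replace the singleton claim with an injection argument of this kind; counting sets that are literally present in $\F$ level by level cannot work, since saturated families (like the chain) can have exactly one set in almost every cardinality.
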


	\begin{proof}
		 By duality, it suffices to prove the case when $\emptyset\in\F$.
		Fix $\F$, an induced-$\D_2$-saturated family in the Boolean lattice $\B_n$, $n \geq 2$, with $\emptyset \in \F$.		We will define an injection $\phi:[n] \rightarrow  \F\setminus\{\emptyset\}$.  If $\{x'\}\in \F$, let $\phi(x')=\{x'\}$. Consider $x \in [n]$ such that $\{x\}\notin\F$. 
        Because $\F$ is induced-$\D_2$-saturated and $\emptyset \in \F$, there exists $B,T \in \F$ such that $\D_2(\emptyset, \{x\}, B,T)$ is in $\F\cup\{x\}$. In particular, $x\in T\setminus B$. Among all such pairs $(B,T)$, 
        choose the pair with $B$ maximum cardinality, breaking ties by minimizing the cardinality of $T$ and set $\phi(x)=T$. 
        
\begin{cl} $T=B\cup \{x\}$. \end{cl}
\begin{proof}
		For contradiction, suppose $T \neq B \cup \{x\}$. So $B \cup \{x\} \not \in \F$, which implies there exist sets $B'$, $S$, $T'$ $\in \F$ that together with $B \cup \{x\}$ induce a $\D_2$ in $\B_n$. 
		Note that $B\cup \{x\}$ cannot be the minimal element in the diamond formed because then $\D_2(B, B', S, T')$ would also be a diamond formed entirely by elements in $\F$. 
		Similarly, $B\cup \{x\}$ cannot be the maximal element of the diamond because $\D_2(B', S, T', T)$ would be a diamond in $\F$. 
		Thus we have diamond $\D_2(B', S, B \cup \{x\}, T')$ in $\F\cup \{B\cup\{x\}\}$. If more than one such induced diamond is present in $\F\cup \{B\cup\{x\}\}$, consider the one in which $B'$ has maximum cardinality. At present, the Hasse diagram in Figure~\ref{fig:D2helper} shows the general setting with the possibility of more comparabilities. 

		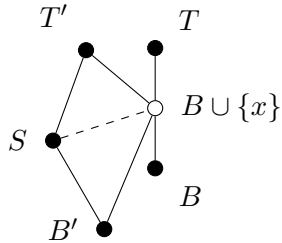
\begin{figure}[H]
			\begin{center}
			\begin{tikzpicture}[x=.8cm,y=.8cm]
				\tikzset{vertex/.style = {shape=circle,draw,fill,minimum size=.2cm, inner sep = 0 }}
				\tikzset{vertex2/.style = {shape=circle,draw,minimum size=.2cm, inner sep = 0 }}
				\node[vertex2, label = {[label distance = .1cm]0:{$B \cup \{x\}$}}] (BuX) at  (0,0) {};
				\node[vertex,  label = {[label distance = .1cm]30:{$T$}}] (T) at (0,1) {};
				\node[vertex,  label = {[label distance = .1cm]-30:{$B$}}] (B) at (0,-1) {};
				\path (BuX) ++(140:1.5) node[vertex,label = {[label distance = .1cm]120:{$T'$}}  ] (Tprime) {};
				\path (Tprime) ++(250:1.6) node[vertex, label = {[label distance = .1cm]180:{$S$}}  ] (S) {};
				\path (S) ++(300:1.7) node[vertex, label = {[label distance = .1cm]180:{$B'$}}  ] (Bprime) {};
				\draw (B) -- (BuX) --(T);
				\draw (BuX) -- (Tprime) -- (S) -- (Bprime)--(BuX);
				\draw [dashed] (S)--(BuX);
			\end{tikzpicture}
			\end{center}
			\caption{Comparabilities (solid lines) and incomparabilities (dashed line) established in the proof of Lemma~\ref{diamondempty}.}
			\label{fig:D2helper}
		\end{figure}

Next, we study the relationship between $B$ and $B'$. 
Since $B' \subsetneq B \cup \{x\}$, either $B' \subseteq B$ or $B'$ is incomparable to $B$. 
However, if $B$ is incomparable to $B'$, then we have the diamond $\D_2(\emptyset, B, B', T')$ in $\F$, a contradiction to the choice of $\F$. 

Next, suppose that $B' \subsetneq B$. Because $S$ and $B\cup \{x\}$ are incomparable, we may conclude $S \not \subseteq B$. 
		Thus, either $B \parallel S$ or $B \subsetneq S$. 
		In the first case, we have the diamond $\D_2(B', B, S, T')$ in $\F$, a contradiction to the choice of $\F$.
 In the second case, we have diamond $\D_2(B, S, B\cup \{x\}, T')$. Because $B' \subsetneq B$ in this case, we have a contradiction to the choice of $B'$ as having maximum size. 
 
It remains to consider the case where $B = B'$. Under this assumption, we can conclude that $x \not \in S$ as $S \parallel (B\cup \{x\})$ and $B \subsetneq S$. Since $x \in T'\setminus S $, the pair $(S,T')$ in $\F$ contradicts the choice of $(B,T)$ because $B \subsetneq S$.

Having obtained a contradiction when $B$ and $B'$ were comparable and when they were incomparable, this concludes the proof of the claim.  
\end{proof}

We finish the proof of the lemma by showing that $\phi$ is injective. Suppose otherwise that there exists a pair $\{x,y\} \subseteq [n]$  and sets $A, A' \in \F$ with $\phi(x)=A \cup \{x\} = A' \cup \{y\}=\phi(y)$ in $\F$. The sets $A$ and $A'$ must be non-empty and  incomparable, since $x \in  A' \setminus A$ and $y \in  A \setminus A'$.  We conclude that $\D_2(\emptyset, A, A', A \cup \{x\})$ is a diamond in $\F$, which contradicts the hypothesis that $\F$ contains no induced copy of $\D_2$.  Hence $\phi:[n] \rightarrow  \F\setminus \{\emptyset\}$ is injective and  $|\F| \geq n+1$ as desired. 
	\end{proof}

We now establish the lower bound  for $\isat(n, \mathcal{V}_2)$ and $\isat(n, \Lambda_2)$ presented in Theorem~\ref{thm:veewedge}, which is a direct consequence of Lemma \ref{diamondempty}.
 
\begin{prop}\label{LB:veewedge}
	For  $n\geq 2$,
	\begin{align*}
		n+1\leq \isat(n, \mathcal{V}_2)= \isat(n, \Lambda_2).
	\end{align*}
\end{prop}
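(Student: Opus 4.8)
The plan is to deduce both the lower bound and the equality from Lemma~\ref{diamondempty}, by showing that every induced-$\mathcal{V}_2$-saturated family is in fact induced-$\D_2$-saturated and necessarily contains the top element $[n]$. Since $\mathcal{V}_2$ is the induced subposet of $\D_2$ obtained by deleting the maximal element (keep the minimum together with the two incomparable middle elements), any family containing no induced $\mathcal{V}_2$ contains no induced $\D_2$; thus an induced-$\mathcal{V}_2$-saturated family $\F$ is automatically $\D_2$-free, and it remains only to promote maximality.

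First I would argue that $[n]\in\F$. The two maximal elements of $\mathcal{V}_2$ are incomparable, while $[n]$ is comparable to every element of $\B_n$; hence $[n]$ cannot occur in any induced copy of $\mathcal{V}_2$. Consequently $\F\cup\{[n]\}$ is $\mathcal{V}_2$-free whenever $\F$ is, so maximality of $\F$ forces $[n]\in\F$. Next I would show $\F$ is maximal $\D_2$-free. Fix $S\notin\F$; since $\F$ is induced-$\mathcal{V}_2$-saturated, $\F\cup\{S\}$ contains an induced $\mathcal{V}_2$, and this copy must use $S$ (otherwise it would already sit inside $\F$). If $S$ is the minimal element of this $\mathcal{V}_2$, with incomparable $q,r\in\F$ above it, then $\D_2(S,q,r,[n])$ is an induced diamond in $\F\cup\{S\}$; if instead $S$ is one of the two maximal elements, with $p\in\F$ below it and $r\in\F$ incomparable to it, then $\D_2(p,S,r,[n])$ is an induced diamond. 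In either case $[n]$ supplies the top of the diamond, so $\F\cup\{S\}$ contains an induced $\D_2$, and $\F$ is induced-$\D_2$-saturated.

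With $[n]\in\F$ in hand, Lemma~\ref{diamondempty} immediately gives $|\F|\geq n+1$, so $\isat(n,\mathcal{V}_2)\geq n+1$. Finally, the order-reversing complementation automorphism $S\mapsto [n]\setminus S$ of $\B_n$ carries $\mathcal{V}_2$ to $\Lambda_2$ and preserves both ``induced'' and ``saturated,'' so it is a cardinality-preserving bijection between induced-$\mathcal{V}_2$-saturated families and induced-$\Lambda_2$-saturated families; this yields simultaneously the matching lower bound for $\Lambda_2$ and the equality $\isat(n,\mathcal{V}_2)=\isat(n,\Lambda_2)$.

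I expect the main obstacle to be the upgrade from maximal $\mathcal{V}_2$-freeness to maximal $\D_2$-freeness in the second step: an induced $\mathcal{V}_2$ does not, on its own, extend to an induced $\D_2$, and the essential point is that the guaranteed top element $[n]$ completes the diamond regardless of whether the newly added set $S$ plays the role of the minimal or a maximal element of the witnessing $\mathcal{V}_2$. One must verify in both configurations that the four chosen sets really induce a diamond, i.e.\ carry no extra comparabilities, which is exactly where keeping track of the incomparable pair is needed.
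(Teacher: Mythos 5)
Your proof is correct and follows essentially the same route as the paper's: both show that a saturated family must contain an extreme element of $\B_n$ and is in fact induced-$\D_2$-saturated, then invoke Lemma~\ref{diamondempty} and complementation symmetry. The only cosmetic difference is that you take $\mathcal{V}_2$ (using $[n]$ to cap the diamond) as primary where the paper takes $\Lambda_2$ (using $\emptyset$), which is immaterial given the duality both arguments rely on.
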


\begin{proof}
	We will  prove the result for $\Lambda_2$. The result for $\mathcal{V}_2$ follows by symmetry. Let $\F$ be an induced-$\Lambda_2$-saturated family. Then $\emptyset \in \F$  because $\emptyset$ is not contained in any induced copies of $\Lambda_2$ in $\B_n$. 

	It remains to show that $\F$ is also induced-$\D_2$-saturated.
	To see that $\F$ contains no  induced copy of a diamond, we remark that $\F$ contains no induced copy of $\Lambda_2$ that is an induced subposet of the diamond. 
	Hence, $\F$ contains no copy of the diamond. 
	For any $S \in  2^{[n]}\setminus \F$, there exists $A,B \in \F$ so that $S, A,$ and $B$  induce a $\Lambda_2$. 
	In particular, none of these three sets can be $\emptyset$. However, $S, A,$ and $B$, together with $\emptyset$ form an induced copy of $\D_2$ in $\F \cup S$. 
	Hence $\F$ is induced-$\D_2$-saturated. 
Thus $n+1 \leq \isat(n, \Lambda_2)$ is a direct consequence of Lemma~\ref{diamondempty}.
\end{proof}

\subsection{Separability}\label{sec:dist}

In this section, we introduce our main tool for establishing lower bounds on induced saturation numbers: the separability graph.  
Using this tool we will provide a lower bound on the induced saturation number for members of ${\mathbb{P}}$ (see Definition~\ref{phat}). 

\begin{definition}[Separating points]
	For any $n\in \mathbb{N}$ and for $x,y \in [n]$ we say that $S\subseteq {[n]}$ {\em separates} $x$ and $y$ if $|S\cap\{x,y\}|=1.$ 
\end{definition}
The study of set systems that separate every pair of points was initiated by R\'{e}nyi~\cite{R61}.

\begin{definition}[Separability Graph]\label{distgraph}
	Let $\F \subseteq 2^{[n]}$ be arbitrary.  
	Define $G_\F$ to be the graph with vertex set $[n]$ in which
	 distinct vertices $x$ and $y$ are adjacent if and only if $x$ and $y$ are separated by some $S \in \F$.
	We call $G_\F$ the  \textit{separability graph} of $\F$.
\end{definition}

Notice that if $\F\subseteq 2^{[n]}$ contains only one set $F$, then $G_{\F}$ is a complete bipartite graph with elements of $F$ in one partite class and elements of $ [n]\setminus F$ in the other partite class. 
Further, observe
\begin{align*}
	E(G_{\F}) = \bigcup\limits_{F \in \F} E(G_F).
\end{align*}

A \textit{biclique cover} of a graph $G$ is a collection of complete bipartite graphs $B_1,\dots,B_t$ such that $$\bigcup_{i=1}^t E(B_i) = E(G).$$  The minimum size of a biclique cover of a graph $G$ is the {\em biclique cover number}, denoted $\bc(G)$ (cf. \cite{MPNR1995, FH96}).  The bicliques $\{G_F :F\in\F\}$ form a biclique cover of $G_{\F}$, implying that

\begin{align*}
	\bc(G_{\F}) \leq |\{G_F:F \in \F\}| = |\F|.
\end{align*}

This establishes the following proposition and corollary.

\begin{prop}\label{distlb}
	Let $\F \subseteq 2^{[n]}$ be  arbitrary. If $G_{\F}$ is the separability graph of $\F$, then
	\begin{align*}
		|\F|\ge \bc(G_{\F}).
	\end{align*}
\end{prop}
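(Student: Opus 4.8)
The plan is to prove the inequality by exhibiting an \emph{explicit} biclique cover of $G_{\F}$ whose size is at most $|\F|$; since $\bc(G_{\F})$ is by definition the minimum size of any biclique cover, this immediately yields $\bc(G_{\F}) \le |\F|$. All of the necessary structural observations have in fact already been recorded in the paragraphs preceding the statement, so the proof amounts to assembling them.

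First I would verify that for each single set $F \in \F$, the graph $G_F$ is a complete bipartite graph. By the definition of separation, two distinct vertices $x,y \in [n]$ are adjacent in $G_F$ precisely when $|F \cap \{x,y\}| = 1$, i.e. exactly one of $x,y$ lies in $F$. Hence the edges of $G_F$ are exactly the pairs with one endpoint in $F$ and the other in $[n]\setminus F$, so $G_F$ is the complete bipartite graph with partite classes $F$ and $[n]\setminus F$ — that is, a biclique. (When $F \in \{\emptyset,[n]\}$ one partite class is empty and $G_F$ has no edges, which causes no difficulty.)

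Next I would invoke the identity $E(G_{\F}) = \bigcup_{F \in \F} E(G_F)$ recorded above, which says that a pair $\{x,y\}$ is an edge of $G_{\F}$ if and only if it is separated by some $F \in \F$, equivalently an edge of $G_F$ for some $F$. Consequently the collection $\{G_F : F \in \F\}$ is a family of bicliques whose edge sets union to $E(G_{\F})$, i.e. a biclique cover of $G_{\F}$. This cover uses at most $|\F|$ bicliques — possibly fewer, since distinct sets may induce the same biclique (for instance $F$ and its complement $[n]\setminus F$ give identical partite classes, and $\emptyset,[n]$ contribute nothing) — so $\bc(G_{\F}) \le |\{G_F : F \in \F\}| \le |\F|$, as desired.

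I do not anticipate any genuine obstacle here: the entire content is the observation that a single set induces a complete bipartite separation pattern and that separations superpose additively over the family. The only points warranting a sentence of care are confirming that $G_F$ is \emph{complete} bipartite (not merely bipartite) directly from the condition $|F \cap \{x,y\}| = 1$, and noting that the degenerate sets $\emptyset$ and $[n]$ yield empty bicliques that do not affect the bound.
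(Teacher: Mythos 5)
Your proposal is correct and follows essentially the same route as the paper: the paper also observes that each $G_F$ is a biclique with partite classes $F$ and $[n]\setminus F$, uses the identity $E(G_{\F})=\bigcup_{F\in\F}E(G_F)$ to conclude that $\{G_F : F\in\F\}$ is a biclique cover, and deduces $\bc(G_{\F})\le|\{G_F : F\in\F\}|=|\F|$. Your extra remarks about the degenerate sets $\emptyset$ and $[n]$ and about distinct sets possibly inducing the same biclique are harmless refinements of the same argument.
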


The fact that $\bc(K_n) = \lceil \log_2 n \rceil$ yields the following corollary:
\begin{corollary}\label{dkn}
	Let $\F \subseteq 2^{[n]}$ be a family of sets such that for any pair $\{x,y\} \subseteq [n]$, there exists a set $F\in \F$ that separates $x$ and $y$.
	Then
	\begin{align*}
		|\F|\ge \lceil \log_2 n \rceil.
	\end{align*}
\end{corollary}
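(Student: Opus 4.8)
The plan is to combine Proposition~\ref{distlb} with the classical fact that $\bc(K_n) = \lceil \log_2 n \rceil$. The strategy is to show that the hypothesis on $\F$ forces the separability graph $G_\F$ to be the complete graph $K_n$, and then to invoke the biclique cover bound directly.

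First I would observe that the hypothesis states exactly that for every pair $\{x,y\}\subseteq[n]$ there is some $F\in\F$ separating $x$ and $y$. By the Definition of the separability graph, $x$ and $y$ are adjacent in $G_\F$ precisely when such a separating set exists. Therefore every pair of distinct vertices is adjacent in $G_\F$, which means $G_\F = K_n$. This is the only substantive step, and it is really just unwinding the definitions.

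Next I would apply Proposition~\ref{distlb}, which gives $|\F| \ge \bc(G_\F) = \bc(K_n)$. To finish I would cite the known value $\bc(K_n) = \lceil \log_2 n \rceil$, which the excerpt has already invoked. Concretely, a biclique cover of $K_n$ corresponds to assigning to each vertex of $[n]$ a binary string of length equal to the number of bicliques so that distinct vertices receive distinct strings (each biclique records one coordinate, with its two sides being the vertices having a $0$ versus a $1$ in that coordinate); since $n$ distinct strings require length at least $\lceil \log_2 n\rceil$, and conversely $\lceil \log_2 n\rceil$ coordinates suffice, we get $\bc(K_n) = \lceil \log_2 n\rceil$. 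Chaining these inequalities yields $|\F| \ge \lceil \log_2 n\rceil$.

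I do not expect any genuine obstacle here: the corollary is an immediate specialization of Proposition~\ref{distlb} to the case where the separability graph is complete, and the value of $\bc(K_n)$ is standard. The only point requiring the slightest care is confirming that ``separates every pair'' is literally equivalent to ``$G_\F$ is complete,'' but this follows directly from the Definition of adjacency in $G_\F$.
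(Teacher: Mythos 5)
Your proposal is correct and follows exactly the paper's argument: the hypothesis makes $G_\F$ complete by the definition of the separability graph, and then Proposition~\ref{distlb} together with $\bc(K_n) = \lceil \log_2 n \rceil$ gives the bound. The only difference is that you also sketch a proof of $\bc(K_n) = \lceil \log_2 n \rceil$ via binary labelings, whereas the paper simply cites this as a known fact; this is a harmless (and correct) addition, not a different approach.
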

\begin{proof}
	If $\F$ separates every pair $\{x,y\} \subseteq [n]$, the graph $G_{\F}$ is isomorphic to $K_n$. 
	\end{proof}

Next, we prove that for each $\PP\in {\mathbb P}$, any induced-$\PP$-saturated family $\F$ separates every pair $\{x,y\} \subseteq [n]$, and so by Corollary~\ref{dkn} we have that $\lceil \log_2 n\rceil \leq \isat(n,\PP)$, establishing Theorem~\ref{phat}. 
To this end, we begin with the following lemma. 

\begin{lem}\label{logbnd}
	Let $\PP$ be in ${\mathbb P}$ and let $\F$ be an induced-$\PP$-saturated family in $\B_n$, where $n \geq 3$. 
	If there exists a pair $\{x,y\} \subseteq [n]$ so that no set $F \in \F$  separates $x$ and $y$, then $|F \cap\{x,y\}| = 0$ for all $F \in \F$.
\end{lem}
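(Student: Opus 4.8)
The plan is to argue by contradiction, assuming both the hypothesis (no $F\in\F$ separates $x$ and $y$, so every $F\in\F$ has $|F\cap\{x,y\}|\in\{0,2\}$) and the negation of the conclusion (some member of $\F$ contains both $x$ and $y$). Write $\F_+=\{F\in\F:\{x,y\}\subseteq F\}$ and $\F_-=\{F\in\F:F\cap\{x,y\}=\emptyset\}$, so that $\F=\F_+\cup\F_-$ and, by the contradiction hypothesis, $\F_+\neq\emptyset$. Choose $M$ to be a set of minimum cardinality in $\F_+$ and set $S=M\setminus\{y\}$. Then $S$ separates $x$ and $y$, so $S\notin\F$, and by induced-$\PP$-saturation the family $\F\cup\{S\}$ contains an induced copy $\mathcal{R}\cong\PP$. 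Since $\F$ is itself induced-$\PP$-free, $S$ must be one of the elements of $\mathcal{R}$.

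The first key step is to show that $S$ and $M$ are indistinguishable from the point of view of every other member of $\F$: for each $G\in\F\setminus\{M\}$ one has $S\subseteq G\iff M\subseteq G$ and $G\subseteq S\iff G\subseteq M$. This follows from the dichotomy $|G\cap\{x,y\}|\in\{0,2\}$ by a short case analysis, using the minimality of $M$ to handle $G\in\F_+$ (where $G\not\subseteq M$) and the fact that $G$ avoids both $x$ and $y$ when $G\in\F_-$. Consequently $S$ and $M$ induce exactly the same comparabilities and incomparabilities with every element of $\F\setminus\{M\}$. If $M$ did not appear in $\mathcal{R}$, then every element of $\mathcal{R}\setminus\{S\}$ would lie in $\F\setminus\{M\}$, and replacing $S$ by $M$ would produce an induced copy of $\PP$ entirely inside $\F$, a contradiction. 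Hence $M\in\mathcal{R}$.

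Next I would pin down the role of $S$ in $\mathcal{R}$. Since $S=M\setminus\{y\}\subsetneq M$ we have $S<M$ in $\mathcal{R}$, and no set of $\F\cup\{S\}$ lies strictly between $S$ and $M$ (such a $Z$ would force $x\in Z$, hence $y\in Z$ by the dichotomy, hence $Z\in\{S,M\}$), so $M$ covers $S$ in $\mathcal{R}$. This cover is moreover unique: any cover $N$ of $S$ in $\mathcal{R}$ satisfies $S\subsetneq N$ with $N\in\F$, whence $x\in N$, so $y\in N$ and $M\subseteq N$; if $M\subsetneq N$ then $S\subsetneq M\subsetneq N$ with $M\in\mathcal{R}$ contradicts $N$ covering $S$. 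Thus $N=M$, and $M$ is the unique cover of $S$ in $\mathcal{R}$.

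Finally I would invoke the Unique Cover Twin Property. Transporting the UCTP of $\PP$ across the isomorphism $\mathcal{R}\cong\PP$, since $S$ has precisely one cover $M$ in $\mathcal{R}$, there is some $S'\in\mathcal{R}$ with $S'\neq S$ that is also covered by $M$. As $S'\neq S$ we have $S'\in\F$ and $S'\subsetneq M$. If $S'\in\F_+$, then $S'\subsetneq M$ contradicts the minimality of $M$. If $S'\in\F_-$, then the twin computation gives $S'\subseteq S$, so $S'\subsetneq S\subsetneq M$ with $S\in\mathcal{R}$, contradicting that $M$ covers $S'$ in $\mathcal{R}$. Either way we reach a contradiction, so no member of $\F$ contains both $x$ and $y$; combined with the hypothesis this yields $|F\cap\{x,y\}|=0$ for all $F\in\F$. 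I expect the main obstacle to be the careful bookkeeping in the case analysis establishing the twin property and the uniqueness of the cover $M$; once those are in place, the UCTP delivers the contradiction immediately.
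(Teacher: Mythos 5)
Your proof is correct and takes essentially the same approach as the paper's: you choose the same minimal witness $M$ (the paper's $S\cup\{x,y\}$), the same perturbed set $S$ (the paper's $S\cup\{x\}$), the same observation that the $\{0,2\}$-dichotomy makes $S$ and $M$ order-indistinguishable from every other member of $\F$, and you invoke UCTP at exactly the same critical point---when $M$ itself lies in the induced copy---with the same two-case contradiction (minimality of $M$ for the $\F_+$ case, a violated cover relation for the $\F_-$ case). Your only deviation is organizational: you split on whether $M$ belongs to the copy $\mathcal{R}$ and finish the $M\in\mathcal{R}$ case by a direct UCTP contradiction, whereas the paper runs a uniform substitution argument through its two claims; this is a cosmetic restructuring, not a different argument.
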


\begin{proof}
	 Fix a family $\F$ that is induced-$\PP$-saturated in $\B_n$, $n\geq 3$. Suppose there is a pair $\{x,y\} \subseteq [n]$ so that for every $F \in \F$,  $|F \cap \{x,y\}| \in \{0,2\}$. Define a partition $\F = \F_0 \cup \F_2$ where, for each $a \in \{0,2\}$, $\F_a$ contains those sets $F \in \F$ with $|F \cap \{x,y\}|=a$. 

	Towards a contradiction, suppose $\F_2$ is non-empty, and let $S$ be a  set of minimum size such that $S \cup \{x,y\} \in \F$. 
	 For ease of notation, let $S_{xy}$ denote $S \cup \{x,y\}$ and $S_x$ denote $S \cup \{x\}$. As $S_x \not \in \F$, there exists some $\F' \subseteq \F$ such that $\F' \cup \{S_x\}$ induces a copy of $\PP$. We will obtain a contradiction by showing that $\F' \cup \{S_{xy}\}$ induces a copy of $\PP$ in $\F$. First observe that for any $F\in\F'$, if $F \subset S_x$, then $F\subset S_{xy}$.  
	 
\begin{cl}\label{cl:UCTP1}	 
	For any $F \in \F'$, if $F \parallel S_x$, then $F\parallel S_{xy}$. 
\end{cl}
\begin{proof}
	 Indeed, if $F\in\F_0$, then there exists $i\in F\setminus S$, so that $F \parallel S_{xy}$. Otherwise, if $F\in F_2$, then $F=S' \cup \{x,y\}$ for some $S' \subseteq [n]\setminus \{x,y\}$.  By the choice of $S$, $|S'|\geq |S|$.  However, $S \not \subseteq S'$, as $F = S' \cup\{x,y\}$ and $S\cup \{x\}$ are incomparable by the hypothesis of this case.	Thus $S' \parallel S$ and so $F = S' \cup \{x,y\}$ and $S \cup \{x,y\}$ are incomparable as desired. 
\end{proof}
		
\begin{cl}\label{cl:UCTP2}
For any $F \in \F'$, if $F$ covers $S_x$ in the poset induced by $\F' \cup \{S_x$\}, then $F\neq S_{xy}$ and $F$ covers $S_{xy}$ in $\F' \cup \{S_{xy}\}$. 
\end{cl}
\begin{proof}
Let $A_1,\ldots,A_k\in\F'$ be the sets that cover $S_x$ in $\PP$, which are necessarily pairwise incomparable, and consequently form an antichain.  The fact that $|A_i\cap\{x,y\}|\neq 1$ and $S_x\subset A_i$ implies $S_{xy}\subseteq A_i$.  Further, if $k\geq 2$, then $S_{xy}$ must be a proper subset of each $A_i$, which implies the claim.
    
	Thus, suppose $A_1$ is the only set in $\F'$ that covers $S_x$. If $A_1\neq S_{xy}$, then $S_{xy} \subset A_1$ and $A_1$ covers $S_{xy}$ in $\F' \cup \{S_{xy}\}$. If instead $A_1=S_{xy}$, then recall $\PP$ has the UCTP. Thus there is a set $U\in\F'$ such that $A_1$ covers $U$ in the poset induced by $\F' \cup \{S_x\}$.
	If $U\in\F_0$, then $U\subset S$, which means $U\subset S_x$, contradicting the fact that $S_{xy}$ covers both $S_x$ and $U$.
	Thus, $U\in\F_2$, and so $U=S' \cup \{x,y\}$ for some $S' \subset S$. Since $U\in \F$ and $|S'| < |S|$, this contradicts the choice of $S$. It follows that $A_1 \neq S_{xy}$, as desired. 
\end{proof}
	
    Since $\F' \cup \{S_x\}$ induces a copy of $\PP$, the two claims imply that $\F' \cup \{S_{xy}\}$ also induces a copy of $\PP$. However, $S_{xy}\in \F$, which means that $\F$ contains an induced copy of $\PP$, a contradiction.
\end{proof}

The following lemma is a dual result to Lemma~\ref{logbnd}.

\begin{lem}\label{logbnd2}
	Let $\PP\in {\mathbb P}$ and let $\F$ be an induced-$\PP$-saturated family in $\B_n$, where $n \geq 3$. 
	If there exists a pair $\{x,y\} \subseteq [n]$ such that no set $F \in \F$  separates $x$ and $y$, then $\{x,y\}\subseteq F$ for all $F \in \F$.
\end{lem}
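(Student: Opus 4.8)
The plan is to exploit the order-reversing duality of the Boolean lattice to reduce Lemma~\ref{logbnd2} to Lemma~\ref{logbnd}. The key observation is that complementation $c: S \mapsto [n]\setminus S$ is an order-reversing bijection on $\B_n$, so it turns $\subseteq$ into $\supseteq$ and sends any induced copy of a poset $\PP$ to an induced copy of its dual $\PP^*$ (the poset with the same ground set and the reversed order relation). Moreover, complementation swaps the two ``bad'' cases: if $S \in \F$ satisfies $|S \cap \{x,y\}|=2$, then $c(S)$ satisfies $|c(S)\cap\{x,y\}|=0$, and vice versa. Thus separation of $x$ and $y$ is preserved under complementation, since $|S\cap\{x,y\}|=1$ iff $|c(S)\cap\{x,y\}|=1$.

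The main obstacle I anticipate is that Lemma~\ref{logbnd} is stated for $\PP\in{\mathbb P}$, and applying duality produces the family $\F^* := \{c(F): F\in\F\}$ and the dual poset $\PP^*$; I must check that $\PP^* \in {\mathbb P}$, i.e.\ that UCTP is preserved under order-reversal. This is \emph{not} automatic: UCTP constrains elements with a unique \emph{cover}, whereas the dual condition would constrain elements with a unique element they cover. So the clean statement ``$\PP\in{\mathbb P}$ implies $\PP^*\in{\mathbb P}$'' is generally false, and a direct black-box application of Lemma~\ref{logbnd} to $(\PP^*,\F^*)$ is not available. Instead, the right move is to \emph{redo the proof of Lemma~\ref{logbnd} in dual form}, mirroring every step but reversing all inclusions.

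Concretely, I would proceed as follows. Fix $\F$ induced-$\PP$-saturated with a pair $\{x,y\}$ separated by no $F\in\F$, so every $F$ has $|F\cap\{x,y\}|\in\{0,2\}$, giving a partition $\F=\F_0\cup\F_2$. Toward a contradiction, suppose $\F_0$ is non-empty (this is the dual of assuming $\F_2\neq\emptyset$), and among all $F\in\F$ with $F\cap\{x,y\}=\emptyset$ choose one, call its form $S$, with $S$ of \emph{maximum} size such that $S\in\F$ and $S\cap\{x,y\}=\emptyset$. Set $S_x := S\cup\{x\}$, noting $S_x\notin\F$; then some $\F'\subseteq\F$ with $\F'\cup\{S_x\}$ induces a copy of $\PP$. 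The goal is to show $\F'\cup\{S\}$ also induces $\PP$ (observe $S\subset S_x$, and for any $F\in\F'$ with $S_x\subset F$ we get $S\subset F$, the dual of the trivial containment observation). I would then prove the two dual claims: first, if $F\parallel S_x$ then $F\parallel S$ (splitting into $F\in\F_2$, where an element of $\{x,y\}\setminus F$ forces incomparability since $x\notin F$ is impossible—here one uses $F\supseteq\{x,y\}$, contradicting $F\parallel S_x\ni x$, so instead $F\in\F_2$ gives $F=S'\cup\{x,y\}$ with $S'\parallel S$ by maximality of $S$; and $F\in\F_0$ handled analogously); and second, the covering claim, now for sets $F$ that \emph{are covered by} $S_x$, invoking UCTP at the place where a unique covered-neighbor coincides with $S$.

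The delicate point—and the genuine reason duality must be internalized rather than cited—is \emph{where UCTP is applied}. In Lemma~\ref{logbnd} UCTP is used in Claim~\ref{cl:UCTP2} to handle an element $A_1$ that is the unique cover of $S_x$ and happens to equal $S_{xy}$; the unique-cover hypothesis of UCTP is exactly what supplies a second set $S'$ below $A_1$ to derive a contradiction with minimality of $S$. In the dual argument the roles of ``cover'' and ``covered'' swap, so I expect the verification of the covering claim to be the hard part: I will need UCTP to fire on the element $S_x$ itself (which in the dual setting has $S$ as a candidate unique cover), and confirm that the UCTP-supplied twin cover contradicts the \emph{maximality} of $S$ rather than minimality. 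Once both dual claims are established, $\F'\cup\{S\}$ induces $\PP$ with $S\in\F$, contradicting that $\F$ is induced-$\PP$-free, exactly as at the end of Lemma~\ref{logbnd}. This forces $\F_0=\emptyset$, i.e.\ $\{x,y\}\subseteq F$ for all $F\in\F$, which is the conclusion.
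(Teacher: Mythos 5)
Your overall instinct---reduce Lemma~\ref{logbnd2} to Lemma~\ref{logbnd} via the order-reversing complementation map---is exactly the paper's strategy, but your proposal is derailed by a false claim at the decisive point. You assert that ``$\PP\in{\mathbb P}$ implies $\PP^d\in{\mathbb P}$'' is generally false, where $\PP^d$ denotes the dual poset. In fact the UCTP \emph{is} self-dual, and the proof is two lines: suppose $S$ is covered by $T$ in $\PP^d$, i.e.\ $S$ covers $T$ in $\PP$. If $T$ has a second cover $S'\neq S$ in $\PP$, then $T$ covers $S'$ in $\PP^d$ and the UCTP conclusion holds at $S$; if instead $S$ is the unique cover of $T$ in $\PP$, then UCTP in $\PP$ supplies $T'\neq T$ also covered by $S$, so in $\PP^d$ the element $S$ has at least the two covers $T$ and $T'$, and the UCTP hypothesis is vacuous at $S$. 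This is precisely what the paper does: it verifies in a few lines that $\PP^d$ has the UCTP and that $\F^d=\{[n]\setminus F: F\in\F\}$ is induced-$\PP^d$-saturated and fails to separate $x$ and $y$, then applies Lemma~\ref{logbnd} as a black box to $(\PP^d,\F^d)$ to get $|F^d\cap\{x,y\}|=0$, i.e.\ $\{x,y\}\subseteq F$ for all $F\in\F$. The clean route you rejected is the correct one.

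Because you denied self-duality, your fallback---redoing the proof of Lemma~\ref{logbnd} with all inclusions reversed, taking $S\in\F_0$ of \emph{maximum} size and comparing $S_x=S\cup\{x\}$ with $S$---is left with an unfilled hole at exactly the step you flag as ``the hard part.'' In the collision case of the covering claim (the unique element of $\F'$ covered by $S_x$ equals $S$), the property your sketch invokes is UCTP for the reversed order, which is the very statement you declared unavailable; ``UCTP firing on $S_x$'' does not type-check against Definition~\ref{UCTP}, which applies only to an element with a unique cover \emph{above} it, not a unique covered element below it. The step can in fact be closed without appealing to duality: first show, using maximality of $S$, that $S_x$ is the unique cover of $S$ in the induced copy of $\PP$ (any other cover $V\supsetneq S$ either lies in $\F_0$, contradicting maximality, or lies in $\F_2$, whence $S\subsetneq S_x\subsetneq V$ and $V$ does not cover $S$); then UCTP applied to $S$ yields a second element covered by $S_x$ inside $S$, contradicting uniqueness. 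But your write-up supplies neither this argument nor the self-duality that would render it unnecessary, so as it stands the proof is incomplete. (A smaller inversion: in your incomparability claim, maximality of $S$ is needed in the $\F_0$ case, while the $\F_2$ case is automatic because $F\supseteq S$ together with $x\in F$ would force $F\supseteq S_x$; you have the two roles swapped.)
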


\begin{proof}
 Let $\PP\in {\mathbb P}$ and let $\F$ be an induced-$\PP$-saturated family in the Boolean lattice $\B_n$ such that there is a pair $\{x,y\}\subseteq [n]$ for which no set in $\F$ separates $x$ and $y$. Define $\PP^d$ to be the dual poset of $\PP$ where $S\leq T$ in $\PP^d$ if and only if $T\leq S$ in $\PP$. Define $\F^d$ to be the family $\{[n] \setminus  F : F\in \F\}$. 

First note that $\F^d$ is induced-$\PP^d$-saturated because $\F$ is induced-$\PP$-saturated. Further, $\F^d$ does not separate $x$ and $y$ because $\F$ does not. Finally, if an element $S$ in $\PP^d$ is covered by some $T$, then $S$ covers $T$ in $\PP$. 

Next we show that $\PP^d$ has the UCTP. Consider an element $S$ that is covered by $T$ in $\PP^d$. Since $\PP$ has the UCTP, either $T$ is covered by $S$ and $S'$ in $\PP$, or $T$ has exactly one cover $S$ in $\PP$ such that $S$ also covers $T'$ in $\PP$. The first case implies $S$ is covered by $T$ that also covers $S'$ in $\PP^d$. The second case implies $S$ is covered by both $T$ and $T'$ in $\PP^d$. Therefore $\PP^d$ has the UCTP. 

 By Lemma~\ref{logbnd}, for each $F^d$ in $\F^d$, $|F^d \cap \{x,y\}|=0$. Therefore $|F \cap \{x,y\}|=2$ for each $F\in \F$. 
\end{proof}

\begin{proof}[Proof of Theorem \ref{thm:phat}]
If $\PP$ is a non-trivial poset with the UCTP and $\F$ is an induced-$\PP$-saturated family that does not separate all pairs of points, then Lemmas ~\ref{logbnd} and ~\ref{logbnd2} imply that $\F=\emptyset$. However $\PP$ has at least two elements, so any induced-$\PP$-saturated family must have at least one element. Thus $\F$ must separate all pairs of points. By Corollary~\ref{dkn}, $|\F| \geq \lceil \log_2 n\rceil$, completing the proof.
\end{proof}

 Since each of $\D_2$, $\mathcal{N},$ and $\bowtie$ are in ${\mathbb{P}}$, Theorem~\ref{thm:phat} implies the lower bounds in Theorems~\ref{thm:diamond},~\ref{thm:theN}, and~\ref{thm:butterfly}.

\section{Future Work}
We have determined the exact induced-$\PP$-saturation number when $\PP$ is  $\mathcal{V}_2$ or $\Lambda_2$ and up to a constant when $\PP$ is $\mathcal{A}_{k+1}$. While many of our bounds do not match, we are prepared to conjecture the asymptotic value of $\isat(n,\PP)$ for several of the posets of interest in this paper.   

\begin{conj}\label{conj:ac}
        For $n>k\geq 3$, $\isat(n,\mathcal{A}_{k+1}) = kn  (1+o(1))$.
\end{conj}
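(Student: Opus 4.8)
The upper bound $\isat(n,\mathcal{A}_{k+1})\le kn(1+o(1))$ already follows from Proposition~\ref{fullchains} (and is sharpened by the construction preceding it), so the content of the conjecture is the matching lower bound $\isat(n,\mathcal{A}_{k+1})\ge kn(1-o(1))$, improving Proposition~\ref{LB:antichain} from $3n-1$ to $(1-o(1))kn$. The plan is to exploit the structural fact already isolated in the proof of Proposition~\ref{LB:antichain}: an induced-$\mathcal{A}_{k+1}$-saturated family $\F$ has width exactly $k$ and equals a union of $k$ maximal chains $\mathcal{C}_1,\dots,\mathcal{C}_k$ of $\B_n$. Writing $m_j=|\F\cap\binom{[n]}{j}|$ for the number of distinct sets of $\F$ at level $j$ and $d_j=k-m_j\ge 0$ for the \emph{deficiency} at that level, one has
\begin{align*}
	|\F|=\sum_{j=0}^{n}m_j=k(n+1)-\sum_{j=0}^{n}d_j.
\end{align*}
Since $d_0=d_n=k-1$ are forced (only $\emptyset$ and $[n]$ occupy the extreme levels) and contribute a constant, the conjecture reduces to proving that the total deficiency satisfies $\sum_{j}d_j=o(kn)$; equivalently, at all but a $o(1)$-fraction of levels the $k$ chains pass through $k$ distinct sets.

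The key tool I would develop is a rerouting lemma that sharply restricts where a \emph{collision} (a level $j$ at which two chains share a set, contributing to $d_j$) may occur. Suppose $\mathcal{C}_a$ and $\mathcal{C}_b$ both pass through a common set $A$ at level $j$, and let $x,y$ be the elements realizing the consecutive sets $A\setminus\{x\}\subset A\subset A\cup\{y\}$ of $\mathcal{C}_a$. Replacing $A$ by $A':=A\cup\{y\}\setminus\{x\}$ inside $\mathcal{C}_a$ yields another maximal chain, and since $A$ remains covered by $\mathcal{C}_b$, the family $\F\cup\{A'\}$ is still covered by $k$ chains, hence has width $\le k$ and contains no induced $\mathcal{A}_{k+1}$. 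By saturation this is impossible unless $A'\in\F$. Thus every collision forces the \emph{Swap Condition}: for each chain through a shared set $A$, the swapped set $A\cup\{y\}\setminus\{x\}$ already lies in $\F$. As $\F$ has at most $k$ sets per level, this is highly restrictive and can only be met when the sets of $\F$ at levels $j-1,j,j+1$ cluster inside a small ground set, exactly as happens at the singleton level of the improved construction.

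From here the plan is to show that such ``protected'' collisions are confined to $o(n)$ levels. I would try to argue that a collision at level $j$ forces an entire connected cluster of level-$j$ sets, reachable by iterated swaps within the Johnson graph $J(n,j)$, to lie in $\F$; since $J(n,j)$ is connected and, for $j$ in the bulk, vastly larger than the at-most-$k$ sets $\F$ can afford at level $j$, the iterated Swap Condition ought to be unsatisfiable unless $j$ lies within a bounded distance of $0$ or $n$. Summing the resulting location constraints over all collisions would then bound $\sum_j d_j$ by $o(n)$ (the construction suggests the truth is $O_k(1)$), yielding the lower bound.

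The main obstacle is precisely this propagation step. The rerouting is rigid: each chain through a collision contributes only \emph{one} forced swap, since the element removed going down and the element added going up are both determined by the chain, so the Swap Condition need not apply again at the neighbor $A'$, which may be occupied by a single chain rather than a second collision. Upgrading the one-step Swap Condition to genuine propagation, or instead rerouting a chain over a \emph{window} of several levels to recover enough freedom to land on a set outside $\F$ in the bulk, and then quantifying the confinement of collisions to a $o(1)$-fraction of levels, is the crux and the reason the statement remains conjectural. A complementary difficulty is that near the boundary the Swap Condition genuinely \emph{can} be satisfied, as the improved construction exhibits, so any successful argument must be location-sensitive rather than forbidding collisions outright.
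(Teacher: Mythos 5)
The statement you were asked to prove is Conjecture~\ref{conj:ac}: the paper offers no proof of it, and its best lower bound is the $3n-1$ of Proposition~\ref{LB:antichain} (improvable, per the acknowledgements, to $4n-O(1)$ by a tedious argument), far below the conjectured $kn(1-o(1))$. Your proposal is correctly framed as a program rather than a proof, and you concede as much. The parts you do establish are sound and match the paper's own machinery: the width of an induced-$\mathcal{A}_{k+1}$-saturated family $\F$ is exactly $k$; by Dilworth's theorem and maximality, $\F$ is the union of $k$ maximal chains of $\B_n$; the accounting $|\F| = k(n+1) - \sum_j d_j$ with $d_j = k - m_j$ is valid since sets at a common level form an antichain, so $m_j \le k$; and your one-step Swap Condition is precisely the rerouting trick the paper applies inside the proof of Proposition~\ref{LB:antichain} (there used only at levels with $m_j \le 2$ to reach the linear bound). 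So your plan is a natural attempt to iterate the paper's local argument globally, which is a reasonable reading of where the difficulty lies.

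The genuine gap is the one you name: propagation. The Swap Condition forces, per chain through a shared set $A$, exactly one set $A' = A \cup \{y\} \setminus \{x\}$ into $\F$, with $x$ and $y$ dictated by that chain's fixed neighbors of $A$; there is no freedom to choose the direction of the swap in $J(n,j)$, and $A'$ may already be among the at most $k$ sets of $\F$ at level $j$, or may be traversed by a single chain, in which case no further swap is forced and the ``cluster'' never grows. The improved $\mathcal{J}\cup\mathcal{K}\cup\mathcal{L}$ construction shows this failure mode is real: near the boundary levels every forced swap lands inside $\F$, and collisions persist across $\Theta(\log k)$ levels, so any successful argument must be level-sensitive, as you observe. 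A further wrinkle you should flag explicitly: while $d_j$ is an invariant of $\F$, the identity of the colliding chains and hence the forced swaps depend on the chosen Dilworth decomposition and on the arbitrary extension to maximal chains, so a propagation argument must either canonicalize the decomposition or extract constraints valid for all of them. As it stands, your program certifies nothing beyond a linear-in-$n$ lower bound of the type the paper already has, and the conjecture remains open.
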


\begin{conj}\label{conj:diamond}
	For $n \geq 2$, $\isat(n, \D_2) = \Theta(n).$
\end{conj}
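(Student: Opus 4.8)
The upper bound $\isat(n,\D_2)\le n+1$ is already established in Theorem~\ref{thm:diamond}, so the content of the conjecture is the matching lower bound $\isat(n,\D_2)=\Omega(n)$; the plan is to show that every induced-$\D_2$-saturated family $\F$ in $\B_n$ has at least $cn$ elements for some absolute constant $c>0$. I would begin with a case split on the extreme elements. If $\emptyset\in\F$ or $[n]\in\F$, then Lemma~\ref{diamondempty} immediately gives $|\F|\ge n+1$, so it suffices to treat the case $\emptyset\notin\F$ and $[n]\notin\F$.

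In that case I would first record a rigidity observation for singletons. Fix $i\in[n]$ with $\{i\}\notin\F$. When $\{i\}$ is added, an induced diamond must appear, and $\{i\}$ can only play the role of its minimum: it cannot be the maximum, since its only proper subsets are $\emptyset$ and itself, so it has no two incomparable sets below it; and it cannot be an interior element, since an interior element has a proper subset in $\F$, which would force $\emptyset\in\F$. Hence there exist $B_i,C_i,D_i\in\F$ with $\{i\}\subsetneq B_i$, $\{i\}\subsetneq C_i$, $B_i\parallel C_i$, and $B_i,C_i\subsetneq D_i$. Because $\F$ has no induced $\D_2$, the incomparable pair $B_i,C_i$ can have no common lower bound in $\F$ (else $\D_2(A',B_i,C_i,D_i)$ would be induced in $\F$); in particular the down-sets $\{F\in\F:F\subseteq B_i\}$ and $\{F\in\F:F\subseteq C_i\}$ are disjoint. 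Dualizing, exactly as in the passage to $\PP^d$ and $\F^d$ in Lemma~\ref{logbnd2}, for each $i$ with $[n]\setminus\{i\}\notin\F$ one obtains witnesses $B_i',C_i'\in\F$ that omit $i$, are incomparable, have a common lower bound, and have no common upper bound in $\F$.

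From here I would argue by a dichotomy. If $\{i\}\in\F$ for at least $n/2$ indices, or $[n]\setminus\{i\}\in\F$ for at least $n/2$ indices, then $\F$ already contains $\Omega(n)$ pairwise distinct sets and we are done. Otherwise there are $\Omega(n)$ coordinates $i$ for which both $\{i\}\notin\F$ and $[n]\setminus\{i\}\notin\F$, and each such $i$ comes equipped with a witness pair $(B_i,C_i)$ containing $i$. The goal is to extract $\Omega(n)$ distinct witnesses. The obstruction is that a single incomparable pair $(B,C)$ can serve as $(B_i,C_i)$ for every $i\in B\cap C$ simultaneously, so naive counting yields only a constant. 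To force many distinct witnesses I would exploit the disjoint-down-set condition: if a fixed pair $(B,C)$ witnesses a large coordinate set $I\subseteq B\cap C$, then no set of $\F$ lies below both $B$ and $C$, which constrains how the minimal elements of $\F$ below $B$ and below $C$ can jointly account for the coordinates of $I$, and I would try to convert this into a linear lower bound on the number of minimal elements, or into an injection from a linear-sized index set into $\F$ in the spirit of Lemma~\ref{diamondempty}.

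The main obstacle, and the reason the statement remains a conjecture, is precisely this last step: controlling the reuse of witnesses. Unlike the situation in Lemma~\ref{diamondempty}, where the global minimum $\emptyset$ pins down a canonical witness $\phi(x)=B\cup\{x\}$ for each coordinate and yields an injection directly, here there is no global bottom to anchor the construction, and the saturating diamonds produced by adding a non-singleton set may place that set in any of the four roles. I expect that ruling out a sublinear family will require a genuinely global structural analysis of induced-$\D_2$-free posets --- for instance a chain- or antichain-decomposition argument analogous to Proposition~\ref{LB:antichain}, but adapted to the weaker diamond-free hypothesis --- rather than the purely local separability argument behind Theorem~\ref{thm:phat}, which only delivers the logarithmic bound.
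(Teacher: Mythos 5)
You should first be clear about the status of this statement: it is Conjecture~\ref{conj:diamond}, and the paper contains no proof of it. What the paper actually establishes is the upper bound $\isat(n,\D_2)\le n+1$ (Proposition~\ref{fullchains}) and only a \emph{logarithmic} lower bound $\lceil\log_2 n\rceil\le\isat(n,\D_2)$ (Theorem~\ref{thm:phat}, via the separability graph and biclique covers), together with Lemma~\ref{diamondempty}, which gives $|\F|\ge n+1$ only under the extra hypothesis $\emptyset\in\F$ or $[n]\in\F$. So there is no proof to compare yours against, and your own closing assessment --- that the argument does not close and that this is why the statement is a conjecture --- is accurate.

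Your partial steps are essentially sound and match the paper's framing of the difficulty. The reduction to the case $\emptyset,[n]\notin\F$ via Lemma~\ref{diamondempty} is exactly right; so is the rigidity observation that an added singleton $\{i\}$ can only be the minimum of the new diamond (its only proper subset is $\emptyset\notin\F$), the consequence that the witness pair $B_i\parallel C_i$ has disjoint down-sets in $\F$, and the dualization for co-singletons. (One small quantitative slip: with thresholds of $n/2$, your ``otherwise'' case only guarantees \emph{at least one} coordinate with both $\{i\}\notin\F$ and $[n]\setminus\{i\}\notin\F$, not $\Omega(n)$ of them; use thresholds of $n/3$.) The genuine gap is the one you name, and it is worth stressing that it is not a technicality that more care would fix: the paper's own $2n$-element induced-$\D_2$-saturated family avoiding $\emptyset$ and $[n]$, namely $\{1\}$, all pairs $\{1,i\}$, the set $[n]\setminus\{1\}$, and all sets $[n]\setminus\{1,i\}$, exhibits maximal witness reuse. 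In that family the set $\{1,i\}$ has no proper superset in $\F$, so every saturating diamond for an added singleton $\{i\}$, $i\ne 1$, has the form $\D_2\bigl(\{i\},\,[n]\setminus\{1,j\},\,[n]\setminus\{1,j'\},\,[n]\setminus\{1\}\bigr)$; hence a single fixed pair of co-pairs serves as the witness pair for all but three coordinates, and counting distinct singleton witnesses certifies only $O(1)$ sets, even though the family has $2n$ elements. Any proof of the conjecture must therefore extract linearly many sets from some global structure of induced-$\D_2$-free families rather than from per-coordinate witnesses anchored at singletons; that missing idea is precisely what the paper isolates as Conjecture~\ref{conj:2n}, which remains open.
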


\begin{conj}\label{conj:butterfly} 
	For $n\geq 3$, $\isat(n, \bowtie) = \Theta(n^2)$.
\end{conj}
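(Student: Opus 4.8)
The upper bound $\isat(n,\bowtie)\le\binom{n}{2}+2n-1=O(n^2)$ is exactly Proposition~\ref{UB:Butterfly}, so the entire content of the conjecture is the matching lower bound $\isat(n,\bowtie)=\Omega(n^2)$; the plan is to extract enough structure from an arbitrary induced-$\bowtie$-saturated family $\F$ to force $\Omega(n^2)$ sets, well beyond the logarithmic bound that Theorem~\ref{thm:phat} already supplies. The first step is a clean reformulation of induced-$\bowtie$-freeness: because the butterfly consists of two incomparable minimal elements lying below two incomparable maximal elements, $\F$ contains no induced $\bowtie$ if and only if, for every incomparable pair $A\parallel C$ in $\F$, the common upper bounds $\{X\in\F:A\subseteq X,\ C\subseteq X\}$ form a chain; since $\bowtie$ is self-dual, this is equivalent to requiring that the common lower bounds of every incomparable pair form a chain. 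I would also record the easy facts that $\emptyset,[n]\in\F$, since neither lies in any induced butterfly and so saturation forces their presence.

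The second step is a structural saturation lemma that localizes the behaviour of small added sets. I would analyze adding a $2$-element set $P=\{i,j\}\notin\F$. Since every element of $\bowtie$ is either minimal or maximal, and a $2$-set cannot be a \emph{maximal} element of the created butterfly (its only incomparable potential co-minimal partners are $\{i\}$ and $\{j\}$, which forces the second maximal element to both contain and be incomparable to $P$, an impossibility), $P$ must be a \emph{minimal} element. Hence there exist incomparable $B,D\in\F$ with $P\subseteq B\cap D$, together with a common lower bound $C\in\F$ satisfying $C\parallel P$ and $C\subseteq B\cap D$. Combining this with the reformulation above, the common lower bounds of the incomparable pair $B,D$ — which are exactly the members of $\F$ contained in $M:=B\cap D$ — form a chain, so the witness $C$ must lie in a chain inside $M$.

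The final step, and the crux, is a counting argument: each of the $\binom n2$ pairs is either a member of $\F$ or is \emph{witnessed} by an incomparable pair $(B,D)$ as above, and the goal is to show that these two possibilities together cannot be realized with $o(n^2)$ sets. The honest difficulty is that a \emph{single} incomparable pair $(B,D)$ with a large intersection $M$ can witness quadratically many pairs $P\subseteq M$ at once — any $P$ incomparable to some element of the chain $\{F\in\F:F\subseteq M\}$ qualifies — so a naive charge of each pair to its witness fails badly. The main obstacle is therefore to convert the \emph{global} tension between the saturation requirement and the two chain conditions into a quantitative bound. I expect the resolution to require one of: an induction on $n$ that peels off a coordinate while tracking how many incomparable witness pairs survive; a weight or entropy function on $\F$ that is additive over a chain decomposition of $\F$; or a Bollob\'as-type set-pair inequality adapted to pairs rather than points, the point-level version being precisely what yields the $\lceil\log_2 n\rceil$ bound through $\bc(K_n)$. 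Establishing this quantitative step is exactly where the argument currently stalls, which is why the statement is recorded here as a conjecture rather than a theorem.
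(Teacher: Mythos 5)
You are addressing Conjecture~\ref{conj:butterfly}, which the paper itself records as \emph{open}: the proved bounds are exactly those of Theorem~\ref{thm:butterfly}, namely $\lceil \log_2 n\rceil \leq \isat(n,\bowtie) \leq \binom{n}{2}+2n-1$, with the upper bound from Proposition~\ref{UB:Butterfly} and the lower bound from the UCTP machinery of Theorem~\ref{thm:phat}. So there is no proof in the paper to compare against, and your proposal — as you concede in your final sentence — does not supply one. To your credit, the preliminary steps are all sound: induced-$\bowtie$-freeness of $\F$ is indeed equivalent to the condition that for every incomparable pair in $\F$ the common upper bounds (equivalently, by self-duality, the common lower bounds) form a chain; $\emptyset$ and $[n]$ belong to every induced-$\bowtie$-saturated family since each is comparable to everything in $\B_n$; and your argument that an added $2$-set $P=\{i,j\}$ can only serve as a \emph{minimal} element of the created butterfly is correct, since the only incomparable pair of proper subsets of $P$ is $\{i\},\{j\}$, which would force the second maximal element to simultaneously contain and be incomparable to $P$.

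The genuine gap is the counting step, and you have diagnosed it accurately yourself: a single incomparable pair $(B,D)$ with a large intersection $M=B\cap D$, together with one long chain of members of $\F$ inside $M$, can witness $\Theta(|M|^2)$ missing $2$-sets at once, so charging each pair to a witness yields nothing superlinear — in fact nothing in your sketch rules out an induced-$\bowtie$-saturated family of size $O(n)$, and the paper cannot currently rule out anything above $\lceil\log_2 n\rceil$, stating explicitly that the growth rate of $\isat(n,\PP)$ is unknown for the butterfly. Your three proposed escape routes (induction peeling off a coordinate, an additive weight or entropy over a chain decomposition, a Bollob\'as-type set-pair inequality at the level of pairs rather than points) are plausible directions, but none is carried out, and no argument along these lines is known. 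So the proposal is an honest structural analysis of an open problem rather than a proof; the missing quantitative lower bound is precisely the content of the conjecture, and you were right to stop where you did rather than assert more than the argument delivers.
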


Toward Conjecture~\ref{conj:diamond}, Lemma~\ref{diamondempty} implies that  we only require better lower bounds for induced-$\D_2$-saturated families that do not contain either $\emptyset$ or $[n]$. The family 
\begin{align*}
	\F=& \{1\} \cup \{\{1,i\}: i\in  [n]\setminus \{1\}\} 
	\cup \left( [n]\setminus \{1\}\right) \cup \{\left( [n]\setminus \{1,i\}\right): i\in  [n]\setminus \{1\}\}
\end{align*}
is such an induced-$\D_2$-saturated that avoids $\emptyset$ and $[n]$ and has size $2n$. Asymptotically we believe this to be the right answer. However, 
similar to the initial step (when $k=6$) of the iterative construction for Theorem~\ref{MNS}, 
it is likely that a small constant improvement can be made. In light of this, we pose the following conjecture: 

\begin{conj}\label{conj:2n}
	There exists a universal constant $c >0$ such that if  $\F$ is an induced-$\D_2$-saturated family that avoids $\emptyset$ and $[n]$, we have:
	\begin{align*}
		2n-c \leq |\F|.
	\end{align*}
\end{conj}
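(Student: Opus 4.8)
The plan is to prove the lower bound by running the charging argument of Lemma~\ref{diamondempty} from both ends of $\F$ simultaneously and showing that the two resulting collections of witnesses are essentially disjoint, thereby doubling the bound. The extremal family displayed above is the right picture to keep in mind: its $n$ minimal elements and $n$ maximal elements are disjoint and together exhaust $\F$, with the minimal element $\{1\}$ of smallest size and the maximal element $[n]\setminus\{1\}$ of largest size serving as the two anchors.

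First I would set up the \emph{bottom charge}. Let $m$ be a minimal element of $\F$ of smallest possible cardinality; since $\emptyset\notin\F$, the set $m$ is nonempty, but $m$ now plays the role that $\emptyset$ plays in Lemma~\ref{diamondempty}. For each $x\in[n]\setminus m$, either $m\cup\{x\}\in\F$, in which case I set $\psi(x)=m\cup\{x\}$, or $m\cup\{x\}\notin\F$ and induced-$\D_2$-saturation produces a diamond; repeating the argument of the Claim inside the proof of Lemma~\ref{diamondempty} with $m$ in place of $\emptyset$ should extract a tight witness $\psi(x)\in\F$ with $m\subsetneq\psi(x)$, and the same incomparability argument (now using the diamond $\D_2(m,A,A',A\cup\{x\})$) should show $\psi$ is injective. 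Every set in the image contains $m$. Dually, applying this to $\F^d=\{[n]\setminus F:F\in\F\}$, which is again induced-$\D_2$-saturated because the diamond is self-dual and which also avoids $\emptyset$ and $[n]$, I obtain a second injection $\psi'$ whose image consists of sets contained in a fixed maximal element $M$ of $\F$.

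The decisive step is disjointness of the two images. Since every $\psi$-witness is a superset of $m$ while every $\psi'$-witness is a subset of $M$, it suffices to choose anchors with $m\not\subseteq M$: then any $t\in m\setminus M$ lies in every bottom witness and in no top witness, forcing $\mathrm{im}\,\psi\cap\mathrm{im}\,\psi'=\emptyset$ and yielding $|\F|\ge|\mathrm{im}\,\psi|+|\mathrm{im}\,\psi'|-O(1)$. Thus the crux splits into (i) guaranteeing a crossing pair consisting of a minimal and a maximal element, and (ii) guaranteeing that a \emph{single} anchor charges $n-O(1)$ points.

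Point (ii) is where I expect the genuine obstacle to lie, and it is precisely the reason that $2n$ rather than a larger constant is plausible. The bottom charge from $m$ reaches only the $n-|m|$ points outside $m$, so if $\F$ avoids all sets of small cardinality---say every element has size at least $s$---then no single minimal anchor covers more than $n-s$ points. In that regime one must argue separately: any $(s-1)$-set $S$ lies below some induced diamond of $\F$ upon adding $S$, and since $|S|$ falls below the minimum size occurring in $\F$, the set $S$ must be the \emph{bottom} vertex, so every $(s-1)$-set is contained in an intersection $B\cap C$ of two incomparable sets of $\F$ that share a common upper bound in $\F$. This covering condition should force $\F$ to contain many sets of small size (or to be large outright), restoring enough anchors to charge $n-O(1)$ points from below and, dually, from above. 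Making this covering-to-charging reduction precise, and amalgamating the partial injections arising from several anchors into two globally near-disjoint injections, is the main difficulty and, I believe, the heart of the conjecture.
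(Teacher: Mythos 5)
You are attacking Conjecture~\ref{conj:2n}, which the paper leaves open: there is no proof of it in the paper, so your proposal must stand on its own, and it does not yet constitute a proof --- as you yourself concede in your final sentence. The most concrete gap is the claim that the charging argument of Lemma~\ref{diamondempty} survives the replacement of $\emptyset$ by a smallest minimal element $m$. That proof leans repeatedly on the fact that $\emptyset$ lies below \emph{every} element of $\B_n$: first, to argue that the induced diamond created by adding $\{x\}$ may be taken in the form $\D_2(\emptyset,\{x\},B,T)$ (if $\{x\}$ were the bottom of the new diamond, replacing it by $\emptyset$ would exhibit a diamond inside $\F$ itself); second, inside the Claim, to form the diamond $\D_2(\emptyset,B,B',T')$ in the case $B\parallel B'$; and third, in the injectivity step, to form $\D_2(\emptyset,A,A',A\cup\{x\})$. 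With $m$ in place of $\emptyset$ none of these moves comes for free: the diamond created by adding $m\cup\{x\}$ need not have $m$ as its bottom element --- its bottom can be any member of $\F$ of cardinality $|m|$ contained in $m\cup\{x\}$, for instance $(m\setminus\{t\})\cup\{x\}$ for some $t\in m$ --- so your assertion that every bottom witness contains $m$, which is exactly what your disjointness step requires, is unsupported; likewise the incomparability and injectivity arguments now need $m\subseteq B\cap B'$, which is not guaranteed.

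Beyond this, two further steps are missing, and you flag the second yourself. (i) The crossing condition $m\not\subseteq M$ for some minimal $m$ and maximal $M$ of $\F$ is asserted as a choice to be made, but its existence is never argued; it holds in the paper's $2n$-element example, but you give no reason it must hold in an arbitrary induced-$\D_2$-saturated family avoiding $\emptyset$ and $[n]$. (ii) Even granting everything else, a single bottom anchor charges at most $n-|m|$ points and a single top anchor at most $n-(n-|M|)$ points, so the method as described yields only $2n-|m|-(n-|M|)-O(1)$; to reach $2n-c$ for a \emph{universal} constant $c$ you must either bound $|m|$ and $n-|M|$ by absolute constants or amalgamate the partial injections from several anchors, and your proposed covering-to-charging reduction for the regime where $\F$ contains no small sets is a heuristic, not an argument. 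Since you explicitly identify this as ``the heart of the conjecture,'' I would say your plan correctly locates where the difficulty lives, but as written it is a program for attacking Conjecture~\ref{conj:2n}, not a proof of it.
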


In this paper, we used the biclique cover number to provide a logarithmic lower bound for $\isat(n,\PP)$ whenever $\PP$ is non-trivial and has the UCTP (Theorem~\ref{thm:phat}). The biclique cover number result in Lemma~\ref{distlb} can be applied to any family $\F\subseteq 2^{[n]}$. It would be interesting to see Lemma~\ref{distlb} applied in a case where $G_{\F}\neq K_n$. 

More generally, there are many open questions in this subject. Here are a few to consider.

\begin{problem}
For which posets $\PP$ is $\sat(n,\PP)$ unbounded? 
\end{problem}

\begin{problem}
For which posets $\PP$ is $\isat(n,\PP)$ unbounded? 
\end{problem}

\begin{problem}
For which posets $\PP$ does the limit $\lim_{n\rightarrow \infty} \frac{\isat(n,\PP)}{n}$ exist?
\end{problem}

\begin{problem}
Is there function $f$ such that for any poset $\PP$, $\isat(n,\PP)\leq f(n)$?
\end{problem}

\section{Acknowledgements}
We would like to thank the referees for their helpful suggestions to improve the presentation of this paper. Their suggestions also led to improving the lower bounds in Theorem~\ref{thm:antichain} and Proposition~\ref{LB:antichain} to $3n-1$ from $2n$. One can improve this further to $4n-O(1)$, but the proof is much more tedious. Since this remains far from the upper bound, we leave further improvements for future work.

All authors were supported in part by NSF-DMS grant \#1427526, ``The Rocky Mountain-Great Plains Graduate Research Workshop in Combinatorics." Ferrara, Kay, Martin, and Smith were supported by a grant from the Simons Foundation (\#426971, Michael Ferrara). Martin was supported by a grant from the Simons Foundation (\#353292, Ryan R. Martin). Smith was supported in part by NSF-DMS grant \#1344199.

\section{References}
\bibliographystyle{abbrv}
\bibliography{Posat_Rev_DM}

\end{document}